\documentclass[1p,review]{elsarticle}

\usepackage[breaklinks,colorlinks=true,bookmarks=false,citecolor=blue,urlcolor=blue]{hyperref}

\usepackage[utf8]{inputenc}
\usepackage[french,english]{babel}
\usepackage{algorithm}
\usepackage{algpseudocode}
\usepackage{amsfonts,amssymb}
\usepackage{amsthm}
\usepackage{amsmath}

\newtheorem{theorem}{Theorem}
\newtheorem{assumption}{Assumption}
\newtheorem{lemma}{Lemma}
\newtheorem{definition}{Definition}
\usepackage{subcaption}
\usepackage[dvipsnames]{xcolor}

\newcommand{\revise}[1]{{\color{red}#1}}
\newcommand{\hm}{iterative model combination algorithm}
\newcommand{\Hm}{Iterative model combination algorithm}
\pagestyle{plain}
\usepackage{enumitem}

\usepackage{cleveref}
\crefname{figure}{figure}{figures}
\crefname{algorithm}{Algorithm}{Algorithms}
\crefname{theorem}{theorem}{theorems}
\crefname{assumption}{assumption}{assumptions}
\crefname{definition}{definition}{definitions}
\crefname{table}{table}{tables}
\crefname{equation}{equation}{equations}

\journal{Physica D: Nonlinear Phenomena}

\bibliographystyle{elsarticle-num}
\biboptions{sort&compress}

\begin{document}

\begin{frontmatter}

\title{Non-intrusive model combination \\
for learning dynamical systems}


\author[nus-math,cnrs]{Shiqi Wu}

\author[cnrs-fr,iuf]{Ludovic Chamoin}

\author[nus,cnrs]{Qianxiao Li\corref{ql_address}}
\cortext[ql_address]{Corresponding author}
\ead{qianxiao@nus.edu.sg}

\address[nus-math]{Department of Mathematics, National University of Singapore, 117543, Singapore}
\address[nus]{Department of Mathematics and Institute for Functional Intelligent Materials, National University of Singapore, 117543, Singapore}
\address[cnrs]{CNRS@CREATE LTD, 1 Create Way, CREATE Tower, 138602, Singapore}
\address[cnrs-fr]{\begin{otherlanguage}{french}
    Université Paris-Saclay, CentraleSupélec, ENS Paris-Saclay, CNRS, LMPS – Laboratoire de Mécanique Paris-Saclay, 91190 Gif-sur-Yvette, France
    \end{otherlanguage}
    }
\address[iuf]{\begin{otherlanguage}{french}
        Institut Universitaire de France, Paris, France
    \end{otherlanguage}
    }

\begin{abstract}
    In data-driven modelling of complex dynamic processes, it is often desirable to combine different classes of models to enhance performance.  Examples include coupled models of different fidelities, or hybrid models based on physical knowledge and data-driven strategies.  A key limitation of the broad adoption of model combination in applications is intrusiveness: training combined models typically requires significant modifications to the learning algorithm implementations, which may often be already well-developed and optimized for individual model spaces.  In this work, we propose an iterative, non-intrusive methodology to combine two model spaces to learn dynamics from data.  We show that this can be understood, at least in the linear setting, as finding the optimal solution in the direct sum of the two hypothesis spaces, while leveraging only the projection operators in each individual space.  Hence, the proposed algorithm can be viewed as iterative projections, for which we can obtain estimates on its convergence properties.  To highlight the extensive applicability of our framework, we conduct numerical experiments in various problem settings, with particular emphasis on various hybrid models based on the Koopman operator approach. This manuscript has been accepted by \href{https://www.sciencedirect.com/science/article/abs/pii/S0167278924001039}{\textit{Physica D: Nonlinear Phenomena}}.
\end{abstract}

\begin{keyword}
    learning dynamics; model combination; machine learning; Koopman operator; iterative projection
\end{keyword}

\end{frontmatter}


\section{Introduction}\label{sec:a_intro}

Model combination plays a crucial role in learning dynamics by bridging
different approaches, effectively tackling difficulties like incomplete
physical models~\cite{wang2023coupled, reinhart2017hybrid, golemo2018sim,
mehta2021neural,mitusch2021hybrid}, multi-fidelity data~\cite{wackers2020multi},
and model stabilization~\cite{linot2023stabilized}.
The main idea behind model combination is to utilize domain knowledge to handle the
difficulties caused by nonlinearity and high-dimensionality in complex dynamical
systems.
There are two primary ways where model combination can be beneficial.
First, there may be partial physical models that can be leveraged,
and data-driven components model the corrections to these models
to improve prediction accuracy~\cite{willard2020integrating,von2020combining,von_Rueden_2021,karniadakis_physics-informed_2021,beckh2021explainable,mitusch2021hybrid}.
Second, general structural information on the underlying dynamics
may point to effective ways of model combination.
For example, the Koopman operator approach~\cite{brunton2021modern, tu2013dynamic,williams2015data}
can effectively model nonlinear dynamics, but
it requires the judicious choice of dictionary functions
that becomes challenging for high-dimensional systems.
However, for dynamics with a known structure such as reaction-diffusion equations,
this difficulty can be alleviated by combining a high-dimensional linear model (diffusion)
and a nonlinear point-wise Koopman model (reaction).
The latter only requires the approximation of 1D functions,
thus the choice of dictionary is much easier (see examples in~\cref{sec:experiments}).

While model combination is desirable, the current algorithmic approaches
suffer from two main limitations.
The first is sub-optimality, which manifests itself in a number of works
on ``residual learning'', which learns data-driven corrections to existing
theoretical models~\cite{reinhart2017hybrid, golemo2018sim, mehta2021neural}.
These methods first select a model in the known physical model space
(the first hypothesis space) and then learn the correction
using a data-driven method (the second hypothesis space).
We will show in~\cref{sec:meth} that in general, this approach does not
give the best possible combined model, even if each of the above
steps achieve minimal error individually in its fitting process.
An alternative to the two steps process is to train the two
models simultaneously.
Examples include coupled physics-informed neural networks~\cite{wang2023coupled,raissi_physics-informed_2019},
the combination of neural networks and the finite-element method~\cite{mitusch2021hybrid}, 
and the combination of linear and nonlinear neural networks for
model stabilization~\cite{linot2023stabilized}.
Although these joint-training processes ensure better optimality,
they bring about the second limitation of intrusiveness
as implementing them typically requires significant
modification to the underlying code.
Non-intrusiveness is a highly desirable algorithmic property
in the analysis of dynamical systems.
For instance, in the context of model reduction,
many studies employ neural networks to
perform non-intrusive dimensionality reduction
on commercial simulation software~\cite{bai2021non, hesthaven2018non}.
Additionally, an intrusive approach to
combination can give rise to additional challenges.
For instance, certain data-driven methods have well-developed optimization techniques
(e.g., we can achieve fast convergence when solving linear regression problems using the GMRES method~\cite{saad2003iterative}).
Intrusively combining these two kinds of
data-driven methods may result in the loss of the favorable properties
associated with optimization.  Also, the combination of two simple
models could lead to a complex problem to be jointly solved. An example of this
complexity is evident in robust principal component
analysis~\cite{cai2019accelerated}, where the objective is to find a
representation of the target matrix as a combination of one sparse matrix and
one low-rank matrix.  Taking into consideration the limitation above, proposing
a non-intrusive model combination approach is a significant challenge in
learning dynamics.

To tackle these challenges, in this work we propose an efficient framework,
referred to as the \emph{\hm}, which provides a
non-intrusive approach to combining two data-driven models
through additive combination.
Moreover, unlike residual learning, our approach is provably optimal
in the linear setting when both hypothesis spaces are closed subspaces of a
Hilbert space.
In this case, the algorithm is equivalent to
alternatively projecting the residual vector onto
the orthogonal complements of the two hypothesis spaces respectively.
We prove that our algorithm converges linearly, with explicit a
priori and a posteriori estimates for the trajectory error.
The latter can be
used as a principled stopping criterion.  Additionally, we introduce an
accelerated version of this algorithm, which is highly efficient for specific
classes of problems where one model hypothesis space is low-dimensional.
We validate these methods across various problem settings, including
diffusion-reaction equations, forced PDE-ODE coupled models in cardiac
electrophysiology, and parameterized tail-accurate control systems,
highlighting the extensive applicability of our framework.
We pay particular attention to Koopman operator-based methods~\cite{brunton2021modern,li2017extended,guo2023learning,korda2018linear}
as candidate hypothesis spaces for data-driven methods,
due to their wide applicability in learning nonlinear dynamics.
Following this introduction, in~\cref{sec:prob} we introduce
the problem formulation of non-intrusive model combination in learning
dynamical systems with a simple example of reaction-diffusion equation.
In~\cref{sec:meth}, we propose
a linearly convergent \hm \ and its acceleration scheme.
We then provide theoretical
analysis and experimental evidence validating the convergence patterns.
\Cref{sec:experiments} showcases numerical results
on three different problem settings and choices of model hypothesis spaces, and discusses  the advantages and limitations of the proposed method. Eventually, conclusions are prospects are drawn in~\cref{sec:conclusion}.
The implementation of our method and the reproduction of presented experiments
are found in~\cite{shiqi2023.Nonintrusivemodelcombinationforlearningdynamics}.


\section{Problem formulation}
\label{sec:prob}
\subsection{Learning dynamics by data-driven methods}
Let us consider
a dynamical system in state space $X\subset \mathbb{R}^K$
with a high-dimensional, nonlinear evolution function
$F:X\to X$.
The states $x_k\in X$ satisfy $x_{k+1} = F(x_k)$ for $k =
0,1,2,3,\ldots,N$.
Our objective is to learn the evolution function $F$
using data-driven methods
and then address both predictive and control problems
for this dynamical system.
That is, we aim to find a model $\tilde{F}$ such that
\begin{equation}
    \tilde{F} = \mathrm{argmin}_{\hat{F}\in \mathcal{G}}\mathbb{E}[L(F(x), \hat{F}(x))].
    \label{equ:0_risk}
\end{equation}
In this context, $\mathcal{G}$ is the hypothesis space of a selected data-driven model class,
$\mathbb{E}$ represents the expectation over the distribution of states $x$,
and $L(y, \hat{y})$ is the loss function which measures how different the prediction of a hypothesis $\hat{F}$ is from that of the true outcome $F$. Here, we choose the loss function $L(y, \hat{y})$ as mean square error
\begin{equation}
    L(y, \hat{y}) = \|y - \hat{y}\|^2.
\end{equation}
For illustration, in this paper we abstract the fitting or learning process on data generated by a ground
truth dynamics driven by $F$ (i.e, solving the minimization problem~\eqref{equ:0_risk}), using a hypothesis space $\mathcal{G}$
as a projection operator $P_{\mathcal{G}}$ 
\begin{equation}
    \tilde{F} = P_{\mathcal{G}}(F) = \mathrm{argmin}_{\hat{F}\in \mathcal{G}}\|F-\hat{F}\|.
    \label{equ:0_projection_notation}
\end{equation}
which means the optimal solution of minimization problem~\eqref{equ:0_risk} is a projection of $F$ onto the hypothesis space $\mathcal{G}$.

Suppose we have some snapshots ${x_0^j, x_1^j, \ldots, x_k^j, \ldots, x_m^j}$ from a set of trajectories generated from initial values $x_0^j,\ j = 1, 2, 3, \ldots, s$.
We can rearrange these snapshots into a set of pairs $\mathcal{D} = [(x_{k}^j, x_{k+1}^j)],\ k = 0, 1, 2, \ldots, m-1,\ j = 1, 2, 3, \ldots, s$.
Since calculating the expectation over the entire state set $X$ is challenging,
we typically solve the optimization problem based on empirical risk:
\begin{equation}
    \tilde{F} = \mathrm{argmin}_{\hat{F}\in G} \sum_{(x,y)\in \mathcal{D}}L(y, \hat{F}(x)).
    \label{equ:0_empirical}
\end{equation}
For convenience, in presenting our method and analysis,
we do not distinguish between the solutions of the optimization problems in~\eqref{equ:0_risk} and~\eqref{equ:0_empirical}.

Numerous machine learning techniques can be employed as the described projection. For example, linear regression involves projecting onto a linear subspace defined by explanatory variables. The Koopman operator approximates the evolution function projected onto a linear subspace spanned by the dictionary observables~\cite{williams2015data}.
Similarly in the nonlinear case, neural networks~\cite{metryar2012foundation} act as solvers for projecting
onto a nonlinear composition function space.

\begin{figure}
    \centering
    \includegraphics[width = 0.6\textwidth]{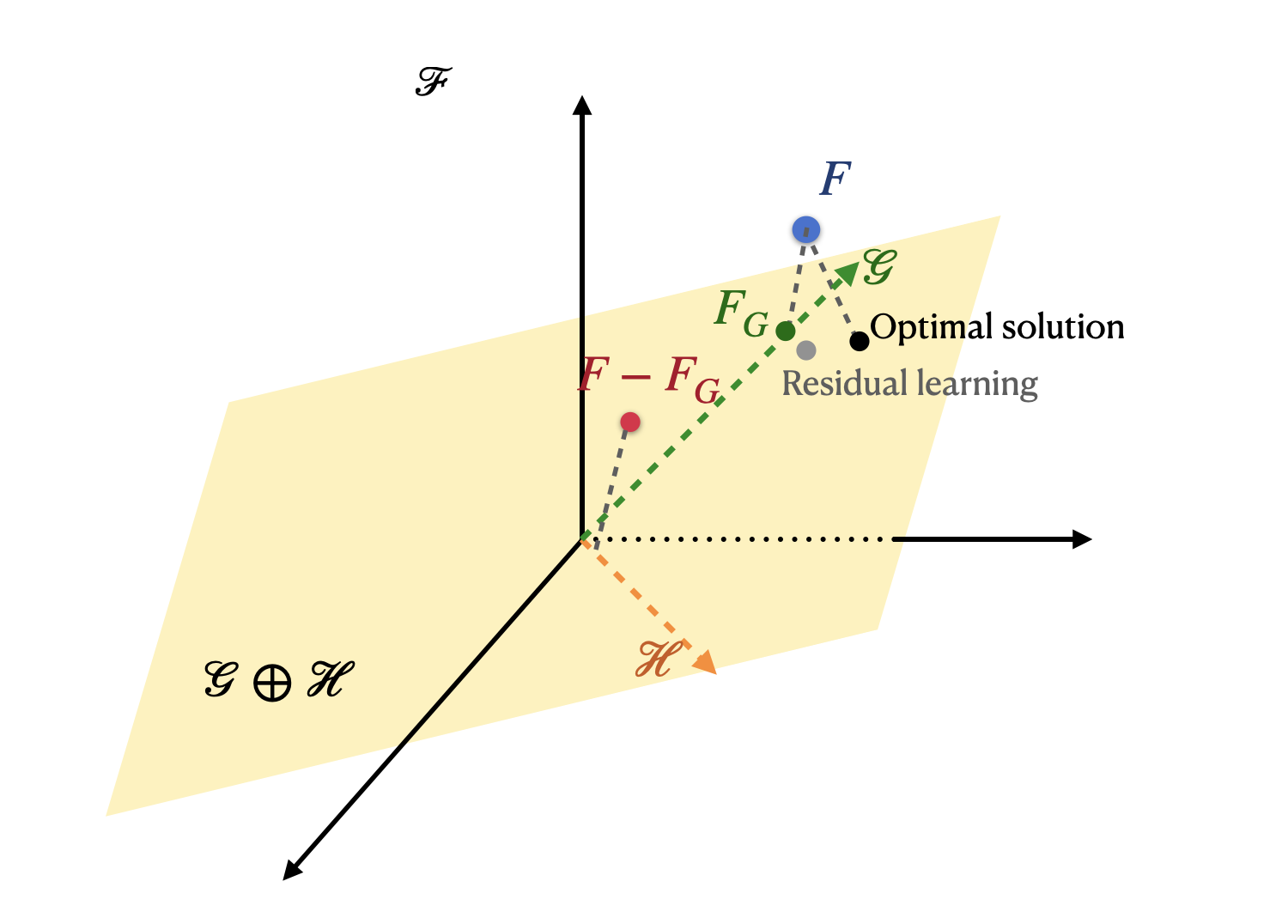}
    \caption{A cartoon for model combination. Suppose we aim to understand the evolution function $F$ within a broad, high-dimensional space denoted as $\mathcal{F}$.
    We can break down the model into two lower-dimensional components, $\mathcal{G}$ and $\mathcal{H}$, and subsequently learn $F$ within a direct-sum subspace. 
    $P_{\mathcal{G}\oplus \mathcal{H}}(F)$ is plotted as the black point in the yellow plane, which represents the optimal approximation. 
    However, if we conduct the residual learning process, projecting $F$ to $\mathcal{G}$ to find $F_\mathcal{G}$ (the green point) and then project the residual $F-F_\mathcal{G}$ (the red point) onto $\mathcal{H}$, 
    the final solution is plotted as the grey point, which is visually not the optimal approximation.
    }\label{fig:0_illustration}
\end{figure}

\subsection{An example: reaction-diffusion equation}
\label{sec:illustrate_example}

Purely data-driven methods may face challenges in learning dynamics due to high
dimensionality and complexity of dynamical systems.
However, combining models can effectively
solve this problem.
High-dimensional problems can often be decomposed into a number of
much simpler problems using a divide and conquer strategy and modelling different parts of dynamics with different data-driven methods.
Additionally, if some structural
information about the dynamics is already known, combining models can help
incorporate this prior knowledge to enhance prediction or
control performance.

Here, we illustrate the benefits of model
combination with a simple example of reaction-diffusion equation:
\begin{equation}
    \frac{du}{dt} = \mu \frac{d^2u}{dz^2} + R_\eta(u).
    \label{equ:0_reaction_diffusion}
\end{equation}
In this equation, $u(z, t)$ is a one-dimensional variable, $z\in [0,1]$ and $t\in [0,0.05]$. $u(z, 0)$ is randomly selected following uniform distribution in $[0,1]$
after discretization, and $u(0, t) = u(1,t)=0$. We observe
that~\eqref{equ:0_reaction_diffusion} can be divided into two distinct
components: the diffusion term $u(z, t) \mapsto \mu\frac{d^2 u}{dz^2}(z, t)$ represents a
linear model which is related to the spatial complexity of the discretization,
while the reaction term $u(z, t) \mapsto R_\eta(u(z, t))$ is a nonlinear and
pointwise function that depends on a parameter $\eta$, but
remains independent of the spatial dimension.
Considering this information, we can model these two
components separately and add them together.  One involves learning the
diffusion term as a linear model that relates $u(z_i, t_{j+1})$ to $u(z_i, t_j)$
and its neighboring values.
The other part involves learning the reaction term as a nonlinear problem based
on $u(z_i, t_j)$ for any $z_i$ within the discretization mesh.
This combination can decompose the high-dimensional problem into simpler
components while preserving the structure of the reaction-diffusion equation.
An example of application of this scheme
is the data-driven modelling of intrinsic self-healing
materials~\cite{anwarali2022dynamic}.
Detailed numerical results for learning the reaction-diffusion equation
are presented in~\cref{sec:example1}.
Furthermore, if we consider a control problem involving the adjustment of $\mu$ in~\eqref{equ:0_reaction_diffusion},
we can model the equation as two parts: $u_{n+1} = F_1(u_n, \mu) + F_2(u_n, \eta)$, and maintain the convexity of the optimization problem through specific structures for $F_1$ and $F_2$.
A numerical example of this type is shown in~\cref{sec:example3}.

\subsection{General problem formulation of model combination}
With the previous motivation in mind,
let us now introduce the general formulation of model combination.
We start with the direct-sum space:
for two subsets $\mathcal{G}$ and $\mathcal{H}$ of a Hilbert space,
the direct-sum space of $\mathcal{G}$ and $\mathcal{H}$ is defined as:
\begin{equation}
    \mathcal{G}\oplus \mathcal{H} = \{F_\mathcal{G} + F_\mathcal{H}:F_\mathcal{G}\in \mathcal{G}, F_\mathcal{H}\in \mathcal{H}\}.
\end{equation}
Suppose we have two methods that can respectively obtain the projection in the hypothesis spaces $\mathcal{G}$ and $\mathcal{H}$, and the snapshots of the dynamical system $(X, F, x_0)$ are known.
In the combination of two models, we aim to approximate the evolution function $F$ in the direct-sum space of $\mathcal{G}$ and $\mathcal{H}$, i.e.,
\begin{equation}
    \begin{split}
    \tilde{F} 
    & = \mathrm{argmin}_{\hat{F}\in \mathcal{G}\oplus\mathcal{H}}\|F-\hat{F}\|\\
     & = \tilde{F}_\mathcal{G} + \tilde{F}_\mathcal{H},\\
     \mathrm{where}\quad \tilde{F}_\mathcal{G}, \tilde{F}_\mathcal{H} &= \mathrm{argmin}_{\hat{F}_\mathcal{G}\in \mathcal{G}, \hat{F}_\mathcal{H}\in \mathcal{H}}\|F-(\hat{F}_\mathcal{G} + \hat{F}_\mathcal{H})\|.\\
    \end{split}
    \label{equ:0_formulation}
\end{equation}
The notation of projection is described in~\eqref{equ:0_projection_notation} and the hypothesis spaces $\mathcal{G}, \mathcal{H}$ are subsets in a Hilbert space.

According to the discussion of non-intrusiveness in~\cref{sec:a_intro},
we assume that we only have access to the
projection operators for each individual hypothesis space.
This can be legacy codes or highly optimized available routines.
A key challenge in addressing the optimization problem
is to find the optimal solution $\tilde{F}_\mathcal{G} + \tilde{F}_\mathcal{H}$
in a non-intrusive manner.
In other words, we only have access to $P_\mathcal{G}$ and $P_\mathcal{H}$, and do not have access to the projection operator
onto $\mathcal{G} \oplus \mathcal{H}$,
corresponding to the learning algorithms already developed for each hypothesis space.
The formulation of non-intrusive model combination is thus:
how can we obtain an approximation $\tilde{F}$ in~\eqref{equ:0_formulation}
using only the individual learning algorithms $P_\mathcal{G}$ and $P_\mathcal{H}$?
In \cref{sec:meth}, we propose an algorithm to address this problem.

\section{Methodology}
\label{sec:meth}
\subsection{
    Non-optimality of the residual learning
    }

Since solving for the optimal solution jointly might not be possible in a
non-intrusive manner, the goal is to identify the corresponding $F_\mathcal{G}\in
\mathcal{G}$ and $F_\mathcal{H}\in \mathcal{H}$ through the individual models. One
approach to address this problem, referred to as residual
learning~\cite{reinhart2017hybrid, golemo2018sim},  involves obtaining an
initial $F_\mathcal{G}$ within the hypothesis space $\mathcal{G}$ and subsequently,
employing a correction model with hypothesis space $\mathcal{H}$ to learn and refine the difference
$F - F_\mathcal{G}$, leading to the projection $P_{\mathcal{H}}(F - F_\mathcal{G})$. 
This leads to an integrated approximation denoted as $F \approx F_\mathcal{G} +
P_\mathcal{H}(F - F_\mathcal{G})$. However, the selection of an appropriate $F_\mathcal{G}$ could be
a challenging task. One potential strategy is to utilize a fitting process to
choose the approximation $F_\mathcal{G} = P_{\mathcal{G}}(F)$ of $F$ within $\mathcal{G}$.

Nonetheless, this residual learning process may not yield the closest approximation, even in simple problem settings.
 We visually illustrate the failure of residual learning using~\cref{fig:0_illustration}.
Here, we first project $F$ (the blue point) onto the subspace $\mathcal{G}$,
and then learn a correction by projecting $F-F_\mathcal{G}$ (the red point) onto the subspace $\mathcal{H}$.
We can immediately observe that the residual learning procedure does not give
the best approximation in the direct sum space.
We will show later in \cref{sec:example1} that for typical problems, residual
learning is non-optimal.

\subsection{The \hm}

By observing~\cref{fig:0_illustration},
a logical progression of residual learning
would be to cyclically perform the correction process.
Starting from an initial guess $F_\mathcal{H}^{0}$ (often taken to be 0),
we iteratively perform the following learning steps
\begin{equation}
    \begin{split}
        F_\mathcal{G}^n \gets P_\mathcal{G}(F-F_\mathcal{H}^n),\\
        F_\mathcal{H}^{n+1} \gets P_\mathcal{H}(F-F_\mathcal{G}^n),
    \end{split}
\end{equation}
until some stopping criterion is met.
It is worth noting that residual learning only conducts this iterative step
once, which is not theoretically optimal.
In contrast,
we provide in \cref{sec:converge} a
theoretical analysis of our method in the special case
where $\mathcal{G},\mathcal{H}$ are closed subspaces.
In particular, we show that the method exhibits a linear convergence pattern.
Furthermore, the convergence rate is related to the correlation
between the two hypothesis spaces, a notion that can be made
precise in the linear setting.

We summarize the proposed method in~\cref{alg:framework}.
The stopping criterion used in our numerical experiments is
\begin{equation}
    \label{equ:stopcriterion1}
    \frac{1}{N}\sum_{i=0}^N\|y^n_i -y_i\| < \epsilon.
\end{equation}
Here, $y_i^n$ represents the one-step prediction generated from the initial
condition $x_i$ by $F^n = F^n_{\mathcal{G}} + F^n_{\mathcal{H}}$, and $y_i$ comes from the real trajectory data,
where $y_i = F(x_i)$. Other stopping criteria can also be used, as described in
the a priori and posteriori estimates provided in \cref{sec:converge}, such as
\begin{equation}
    \label{equ:stopcriterion2}
    \frac{1}{N}\sum_{i=0}^N\|y^{n}_{i} - y^{n-1}_{i}\|<\epsilon.
\end{equation}

\begin{algorithm}
    \caption{The \hm}\label{alg:framework}
    \begin{algorithmic}
    \Require Data-driven models functioning as projection operators $P_{\mathcal{G}}$ and $P_{\mathcal{H}}$
    \State$n\gets 0$
    \State$F_\mathcal{H}^n\gets P_{\mathcal{H}}(F)$  \Comment{Get the best approximation of $F$ by $P_{\mathcal{H}}$}
    \While{stopping criterion is not satisfied}
    \State$F_\mathcal{G}^n\gets P_{\mathcal{G}}(F-F_\mathcal{H}^n)$    \Comment{Get the best approximation of $F-F_\mathcal{H}$ by $P_{\mathcal{G}}$}
    \State$F_\mathcal{H}^{n+1}\gets P_{\mathcal{H}}(F-F_\mathcal{G}^n)$    \Comment{Get the best approximation of $F-F_\mathcal{G}$ by $P_{\mathcal{H}}$}
    \State$n\gets n+1$
    \EndWhile
    \State$\tilde{F} = F_\mathcal{H}^n + F_\mathcal{G}^n$    \Comment{Obtain the best approximation in $\mathcal{H}\oplus \mathcal{G}$}
    \end{algorithmic}
\end{algorithm}

\subsection{Convergence analysis}
\label{sec:converge}
In this part, we give a theoretical analysis on convergence of
\hm\ (\cref{alg:framework})
in the subspace case.
The key idea in our analysis is the observation that
our algorithm is essentially an iterative projection
of the \emph{residual} $F - F_\mathcal{H}^{0}$
onto the orthogonal complements
$\mathcal{G}^\perp$ of $\mathcal{G}$
and $\mathcal{H}^\perp$ of $\mathcal{H}$ respectively.
Hence, the convergence and its rate
follow from the alternative projection method
proposed by John von Neumann~\cite{vonNeumann1949} to
find the intersection of two closed subspaces.
This projection
algorithm was later extended to convex sets by
Dykstra~\cite{dykstra_algorithm_1983} and applied to
multivariate regression problems~\cite{collatz2006approximation} and robust principal component analysis~\cite{cai2019accelerated}.

After presenting the convergence of the approximations ${F}^{n} := F^{n}_{\mathcal{G}} + F^{n}_{\mathcal{H}}$,
we provide estimates for the $k$-step predictive error after $n$ iterations,
denoted as $e_k^n = \|x_k^n - x_k\|$.
Estimates of the limiting value of this error, denoted as $e_k = \lim_{n\to+\infty}e_k^n$, are also presented.
Here, $x_k^n$ represents the predicted state at time $k$ based on the $n$-th iteration, while ${x_k}$ represents the actual trajectory data.
For all the theorems discussed in this section, we assume that
the dynamical system and models have the following properties in \cref{ass:1}.
\begin{assumption}
    \label{ass:1}
    \quad
    \begin{enumerate}[label = (\arabic*)]
        \item $\mathcal{H}$ and $\mathcal{G}$ are closed
        linear subspaces of a Hilbert space.
        \item $F^n_\mathcal{G}$ and $F^n_\mathcal{H}$ are iteratively computed by \cref{alg:framework}.
        \item The set of states $X\subset \mathbb{R}^K$ is bounded, and we denote $M = \max_{x\in X}\|x\|$.
        \item For illustrative purposes, we assume $\|F\|=1$.
    \end{enumerate}
\end{assumption}

We will now provide an analysis of the convergence of \cref{alg:framework}.

\begin{theorem}[Convergence]
\label{thm:converge}
    Under \cref{ass:1}, the following holds:
    \begin{enumerate}[label = (\arabic*)]
        \item
            $\lim_{n\to + \infty}\|P_{\mathcal{G}\oplus \mathcal{H}}(F) - F^n\| = 0$.

        \item As $n\to \infty$,
        the predicted trajectory $\{x^n_k\}$ converges to
        some $\{\tilde{x}_k\}$.
        Suppose the distance between $F$ and $\mathcal{G}\oplus \mathcal{H}$ equals to $\epsilon_F$. For any finite $k\in \mathbb{Z}^+$, the following inequality holds:
        \begin{equation}
        e_k = \|\tilde{x}_k - x_k\| \leq ((1 + \epsilon_F)^k - 1)M.
        \end{equation}
        In particular, when $\epsilon_F = 0$,
        \begin{equation}
            \lim_{n\to +\infty}e^n_k = 0,\quad \forall k\in \mathbb{Z}^+.
        \end{equation}
    \end{enumerate}
\end{theorem}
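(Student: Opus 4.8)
The plan is to prove part~(1) by recognizing \cref{alg:framework} as von Neumann's alternating projection method applied to the orthogonal complements $\mathcal{G}^\perp$ and $\mathcal{H}^\perp$, and then to obtain part~(2) from this operator-level convergence via a discrete Grönwall estimate.

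For part~(1), write $P_{\mathcal{G}^\perp} = I - P_{\mathcal{G}}$, $P_{\mathcal{H}^\perp} = I - P_{\mathcal{H}}$, and set $F^n := F^n_{\mathcal{G}} + F^n_{\mathcal{H}}$. The first step is to establish, by induction on $n$, the identity
\begin{equation}
  F - F^n = \bigl(P_{\mathcal{G}^\perp}P_{\mathcal{H}^\perp}\bigr)^{n+1}(F).
\end{equation}
The base case follows from $F - F^0 = (I - P_{\mathcal{G}})(F - F^0_{\mathcal{H}}) = P_{\mathcal{G}^\perp}(F - P_{\mathcal{H}}F) = P_{\mathcal{G}^\perp}P_{\mathcal{H}^\perp}(F)$. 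For the inductive step one combines $F - F^n = P_{\mathcal{G}^\perp}(F - F^n_{\mathcal{H}})$ with $F - F^n_{\mathcal{H}} = P_{\mathcal{H}^\perp}(F - F^{n-1}_{\mathcal{G}}) + F^{n-1}_{\mathcal{G}}$: applying $P_{\mathcal{G}^\perp}$ annihilates the $\mathcal{G}$-component $F^{n-1}_{\mathcal{G}}$, and after rewriting $F - F^{n-1}_{\mathcal{G}} = (F - F^{n-1}) + F^{n-1}_{\mathcal{H}}$ inside $P_{\mathcal{H}^\perp}$ the $\mathcal{H}$-component is annihilated as well, leaving $F - F^n = P_{\mathcal{G}^\perp}P_{\mathcal{H}^\perp}(F - F^{n-1})$. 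Since $\mathcal{G}^\perp$ and $\mathcal{H}^\perp$ are closed subspaces with $\mathcal{G}^\perp \cap \mathcal{H}^\perp = (\mathcal{G}\oplus\mathcal{H})^\perp$, von Neumann's theorem~\cite{vonNeumann1949} gives $(P_{\mathcal{G}^\perp}P_{\mathcal{H}^\perp})^m(F) \to P_{(\mathcal{G}\oplus\mathcal{H})^\perp}(F) = F - P_{\mathcal{G}\oplus\mathcal{H}}(F)$ as $m\to\infty$, and taking $m = n+1$ yields $\|P_{\mathcal{G}\oplus\mathcal{H}}(F) - F^n\| \to 0$ (with $P_{\mathcal{G}\oplus\mathcal{H}}$ read as the projection onto the closure if the sum is not already closed).

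For part~(2), set $\tilde F := P_{\mathcal{G}\oplus\mathcal{H}}(F)$ and let $\{\tilde x_k\}$ be its trajectory from $x_0$. Using that the fitting norm dominates pointwise evaluation of the operators in play, $\|A(x)\| \le \|A\|\,\|x\|$ (automatic, e.g., when $\mathcal{G},\mathcal{H}$ are matrix subspaces with the Frobenius inner product, the relevant linear setting), an induction on $k$ shows $x^n_k \to \tilde x_k$ for each fixed $k$: the case $k=0$ is trivial since $x^n_0 = x_0 = \tilde x_0$, and in the step $\|x^n_k - \tilde x_k\| \le \|F^n\|\,\|x^n_{k-1} - \tilde x_{k-1}\| + \|F^n - \tilde F\|\,\|\tilde x_{k-1}\|$, where $\|F^n\|$ is uniformly bounded (by part~(1), $\|F - F^n\| \le \|F\| = 1$, so $\|F^n\| \le 2$), $\|\tilde x_{k-1}\|$ is fixed and finite, and $\|x^n_{k-1} - \tilde x_{k-1}\| \to 0$, $\|F^n - \tilde F\| \to 0$. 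Hence $e^n_k = \|x^n_k - x_k\| \to \|\tilde x_k - x_k\| =: e_k$, which is the claimed convergence of the predicted trajectory. To bound $e_k$, split $\tilde x_k - x_k = \bigl(\tilde F(\tilde x_{k-1}) - F(\tilde x_{k-1})\bigr) + \bigl(F(\tilde x_{k-1}) - F(x_{k-1})\bigr)$: the second term is at most $\|F\|\,e_{k-1} = e_{k-1}$, and the first is at most $\|\tilde F - F\|\,\|\tilde x_{k-1}\| = \epsilon_F\|\tilde x_{k-1}\| \le \epsilon_F(M + e_{k-1})$, since $\|\tilde F - F\| = \mathrm{dist}(F, \mathcal{G}\oplus\mathcal{H}) = \epsilon_F$ and $\|\tilde x_{k-1}\| \le \|x_{k-1}\| + e_{k-1} \le M + e_{k-1}$. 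This gives the recursion $e_k \le (1+\epsilon_F)e_{k-1} + \epsilon_F M$ with $e_0 = 0$, and unrolling it yields $e_k \le \epsilon_F M \sum_{j=0}^{k-1}(1+\epsilon_F)^j = M\bigl((1+\epsilon_F)^k - 1\bigr)$. Finally, if $\epsilon_F = 0$ then $\tilde F = F$, so $\tilde x_k = x_k$, $e_k = 0$, and hence $\lim_{n\to\infty} e^n_k = 0$ for every $k$.

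The step I expect to be the main obstacle is the induction for part~(1): one must keep careful track of which iterate ($F^n_{\mathcal{G}}$ versus $F^n_{\mathcal{H}}$, at index $n$ versus $n-1$) lies in which subspace, so that exactly the right projections cancel and the clean one-step recursion $F - F^n = P_{\mathcal{G}^\perp}P_{\mathcal{H}^\perp}(F - F^{n-1})$ emerges; once it does, part~(1) is an immediate appeal to von Neumann and part~(2) is a routine Grönwall argument. The only other point that deserves a precise statement in the paper's setting is the compatibility between the Hilbert-space norm used for fitting and the evaluation of the learned operators on states in $X$.
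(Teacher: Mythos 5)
Your proposal is correct and follows essentially the same route as the paper: part (1) identifies the iteration with von Neumann's alternating projections onto $\mathcal{G}^\perp$ and $\mathcal{H}^\perp$ via the identity $F - F^n = (P_{\mathcal{G}^\perp}P_{\mathcal{H}^\perp})^{n+1}(F)$ (the paper's version differs only by an indexing offset), and part (2) uses the same recursion $e_k \le (1+\epsilon_F)e_{k-1} + \epsilon_F M$. Your explicit caveats about the closedness of $\mathcal{G}+\mathcal{H}$ and the operator-norm compatibility $\|A(x)\|\le\|A\|\,\|x\|$ are points the paper leaves implicit, and are worth flagging.
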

\Cref{thm:converge} demonstrates that as $n\to +\infty$, the evolution
function obtained through \cref{alg:framework} converges to the projection of
$F$ onto the direct-sum space $\mathcal{G}\oplus \mathcal{H}$.
Additionally, the predicted trajectory also converges.
The distance between the converged prediction
and the reference trajectory can be controlled, and the distance is 0
when $F$ is in $\mathcal{G}\oplus \mathcal{H}$. The detailed proof is shown in \ref{appendix1}.

To further analyze the estimate of the convergence rate, we introduce the
definition of minimum angle between subspaces~\cite{jordan1875essai}.
This concept provides a mathematical measure that will
help understand the relationship between the
convergence rate and the chosen models spaces.

\begin{definition}[Minimum angle between subspaces]
\label{def1}
    We define $\theta_0(\mathcal{G},\mathcal{H})$ as the minimum angle between subspaces. $c_0(\mathcal{G},\mathcal{H}) = \cos(\theta_0(\mathcal{G},\mathcal{H}))$ is given by
    \begin{equation}
        \sup\{|\langle g, h\rangle|:g\in \mathcal{G}, \|g\|\leq1, h\in \mathcal{H},\|h\|\leq 1\}.
    \end{equation}
    Define $c(\mathcal{G},\mathcal{H})=c_0(\mathcal{G}\cap (\mathcal{G}\cap \mathcal{H})^{\bot}, \mathcal{H}\cap (\mathcal{G}\cap \mathcal{H})^\bot)$, then $0\leq c(\mathcal{G},\mathcal{H})\leq c_0(\mathcal{G},\mathcal{H})\leq 1$, where $\mathcal{G}^\bot$, $\mathcal{H}^\bot$ represent the orthogonal complements of $\mathcal{G}$, $\mathcal{H}$.
\end{definition}

\begin{theorem}[Convergence rate]
    \label{thm:convrate}
    Under~\cref{ass:1}, the following conclusions hold:
    \begin{enumerate}[label = (\arabic*)]
    \item For all $n\in \mathbb{Z}^+$, $\|F^n-P_{\mathcal{G}\oplus \mathcal{H}}(F)\|\leq c(\mathcal{G},\mathcal{H})^{2n-1}$.
\item For any finite $k\in \mathbb{Z}^+, n\in \mathbb{Z}^+$, the following inequality holds:
\begin{equation}
        e_k^n = \|x^n_{k}-x_k\| \leq [(1+c(\mathcal{G},\mathcal{H})^{2n-1} + \epsilon_F)^k-1]M.\\
\end{equation}
\item Specifically, if $F\in \mathcal{G}\oplus\mathcal{H}$, then
\begin{equation}
    e_k^n = \|x^n_{k}-x_k\| \leq [(1+\|F - F^n\|)^k-1]M
\end{equation}
\begin{equation}
        e_k^n = \|x^n_{k}-x_k\| \leq [(1+\frac{c(\mathcal{G}, \mathcal{H})}{1-c^2(\mathcal{G}, \mathcal{H})}\|F^n - F^{n-1}\|)^k-1]M
\end{equation}
Furthermore, when $k\|F - F^n\|\leq 1,\ k\frac{c^2(\mathcal{G}, \mathcal{H})}{1-c(\mathcal{G}, \mathcal{H})}\|F^n - F^{n-1}\|<1$, we have
\begin{equation}
    e_k^n \leq k^2 M \|F - F^n\|.
\end{equation}
\begin{equation}
    e_k^n \leq k^2 M \frac{c(\mathcal{G}, \mathcal{H})}{1-c^2(\mathcal{G}, \mathcal{H})} \|F^n - F^{n-1}\|.
\end{equation}
\end{enumerate}
\end{theorem}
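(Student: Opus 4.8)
The plan is to treat parts (1), (2), (3) in sequence, since (2) and (3) are purely the trajectory-error bookkeeping already developed for \Cref{thm:converge} applied with the explicit geometric bound from (1). The central object is the classical von~Neumann rate for alternating projections: if $P_{\mathcal{G}^\perp}, P_{\mathcal{H}^\perp}$ are the orthogonal projections onto the orthogonal complements, then iterating $(P_{\mathcal{H}^\perp}P_{\mathcal{G}^\perp})$ on a vector contracts its component outside $\mathcal{G}^\perp \cap \mathcal{H}^\perp = (\mathcal{G}\oplus\mathcal{H})^\perp$ by a factor $c(\mathcal{G},\mathcal{H})^2$ per full cycle, where $c(\mathcal{G},\mathcal{H})$ is the cosine of the Friedrichs (minimum) angle from \Cref{def1}; this is exactly the Aronszajn/Kayalar--Weinert estimate. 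So for part (1) I would first record, from the discussion in \Cref{sec:converge}, that \cref{alg:framework} is alternating projection of the residual $r^0 = F - F_\mathcal{H}^0$ onto $\mathcal{G}^\perp$ and $\mathcal{H}^\perp$, and that $F^n = F - r^n$ where $r^n$ is the iterate after $n$ cycles (the half-step with $P_\mathcal{G}$ then the half-step with $P_\mathcal{H}$). Since $P_{\mathcal{G}\oplus\mathcal{H}}(F) = F - P_{(\mathcal{G}\oplus\mathcal{H})^\perp}(r^0)$, the quantity $\|F^n - P_{\mathcal{G}\oplus\mathcal{H}}(F)\|$ equals the norm of the part of $r^n$ lying in $\mathcal{G}\oplus\mathcal{H}$, i.e. $\|(I - P_{(\mathcal{G}\oplus\mathcal{H})^\perp})r^n\|$. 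Decomposing $r^0$ along $(\mathcal{G}\oplus\mathcal{H})^\perp$ (fixed by the iteration) and its complement, applying the contraction $n-1$ full cycles plus one extra half-step (accounting for the asymmetry in how $F^n$ is formed, which is why the exponent is $2n-1$ rather than $2n$), and using $\|r^0\| = \|F - P_\mathcal{H}(F)\| \le \|F\| = 1$ from \Cref{ass:1}(4), yields $\|F^n - P_{\mathcal{G}\oplus\mathcal{H}}(F)\| \le c(\mathcal{G},\mathcal{H})^{2n-1}$.

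For part (2): \Cref{thm:converge}(2) already gives $e_k = \|\tilde x_k - x_k\| \le ((1+\epsilon_F)^k - 1)M$ by a Grönwall-type recursion on $e_k^n$ along the one-step prediction map, using that each one-step error is amplified by at most $\|\hat F\|$ where $\hat F$ is the learned model, plus the per-step model error. The only change here is that at finite $n$ the model $F^n$ is not yet the optimum, so the effective per-step ``model defect'' is $\epsilon_F$ (distance of $F$ to $\mathcal{G}\oplus\mathcal{H}$) plus the optimization gap $\|F^n - P_{\mathcal{G}\oplus\mathcal{H}}(F)\| \le c(\mathcal{G},\mathcal{H})^{2n-1}$ from part~(1). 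Substituting $\epsilon_F \rightsquigarrow \epsilon_F + c(\mathcal{G},\mathcal{H})^{2n-1}$ into the recursion of \Cref{thm:converge} gives $e_k^n \le [(1+c(\mathcal{G},\mathcal{H})^{2n-1}+\epsilon_F)^k - 1]M$. I would just point to the appendix proof of \Cref{thm:converge} and note the substitution, rather than re-deriving the recursion.

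Part (3) is the specialization $F \in \mathcal{G}\oplus\mathcal{H}$, so $\epsilon_F = 0$ and the total one-step defect is exactly $\|F - F^n\|$, giving the first displayed bound immediately from part~(2). For the second displayed bound I need an a~posteriori estimate replacing $\|F - F^n\|$ by the computable increment $\|F^n - F^{n-1}\|$: standard alternating-projection theory gives $\|F - F^n\| \le \frac{c^2(\mathcal{G},\mathcal{H})}{1-c^2(\mathcal{G},\mathcal{H})}\|F^n - F^{n-1}\|$ (sum the geometric tail of increments, each successive cycle shrinking by $c^2$), which plugs into the first bound. The final two inequalities are the linearization $(1+a)^k - 1 \le k^2 a$ valid when $ka \le 1$ — this follows from $(1+a)^k \le e^{ka} \le 1 + ka + (ka)^2 \cdot \tfrac{1}{2}\cdot\frac{2}{1}\le 1+ka(1+\tfrac{ka}{2}\cdot\ldots)$; more cleanly, $(1+a)^k-1 = a\sum_{j=0}^{k-1}(1+a)^j \le a\cdot k(1+a)^{k-1} \le a k \cdot e^{(k-1)a} \le ak\cdot e \le 3ak$, but to land exactly on $k^2$ one uses $(1+a)^k - 1 \le ka(1+a)^{k-1}$ together with $(1+a)^{k-1}\le k$ when $(k-1)a\le 1$ (since $(1+a)^{k-1}\le 1+(k-1)a\cdot\frac{(1+a)^{k-1}-1}{(k-1)a}$, or simply $(1+1/(k-1))^{k-1}\le e \le k$ for $k\ge3$ and check small $k$ directly). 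Applying this with $a = \|F-F^n\|$ and with $a = \frac{c^2}{1-c^2}\|F^n-F^{n-1}\|$ gives the last two displays (noting $\frac{c}{1-c^2} \ge \frac{c^2}{1-c^2}$ since $c\le 1$, which is why the stated coefficient has a single power of $c$ in the numerator — a mild loosening for a cleaner constant).

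The main obstacle I anticipate is getting the exponent $2n-1$ in part~(1) exactly right: one must be careful about the indexing convention in \cref{alg:framework} (the initialization $F_\mathcal{H}^0 = P_\mathcal{H}(F)$ is itself one projection, and each loop iteration performs $P_\mathcal{G}$ then $P_\mathcal{H}$, but $\tilde F = F_\mathcal{H}^n + F_\mathcal{G}^n$ uses a $\mathcal{G}$-output paired with the \emph{next} $\mathcal{H}$-output), so the number of projection half-steps contributing to the contraction of the in-$(\mathcal{G}\oplus\mathcal{H})$ component of the residual is an odd number $2n-1$. Everything else is either a direct appeal to \Cref{thm:converge} or a standard elementary inequality.
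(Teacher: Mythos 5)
Your proposal is correct and follows essentially the same route as the paper: identify \cref{alg:framework} with alternating projections of the residual onto $\mathcal{G}^\perp$ and $\mathcal{H}^\perp$, obtain the rate $c(\mathcal{G},\mathcal{H})^{2n-1}$ from the Kayalar--Weinert/Aronszajn estimate, substitute the resulting per-step model defect into the trajectory recursion of \cref{thm:converge} for parts (2) and (3), and finish with the geometric tail sum for the a posteriori bound and the elementary inequality $(1+a)^k-1\leq k^2a$ for $ka\leq1$. The only real difference is that the paper proves the key operator-norm identity $\|(Q_AQ_BQ_A)^{\,}\|$-style, i.e.\ $\|(Q_AQ_B)^n\|=\|Q_AQ_B\|^{2n-1}$, from scratch via self-adjointness rather than citing it (your half-step-counting heuristic alone would not pin down the sharp exponent, but the cited classical result does), and your a posteriori constant $c^2/(1-c^2)$ is in fact slightly sharper than the $c/(1-c^2)$ the appendix derives.
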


These inequalities indicate that the error
between $F^n$ and $P_{\mathcal{G}\oplus \mathcal{H}}(F)$
decreases exponentially with the number of iterations,
with a rate determined by the value of $c(\mathcal{G},\mathcal{H})$.
Also, when $F\in \mathcal{G}\oplus\mathcal{H}$,
the a posteriori estimate of \cref{alg:framework} presents clear evidence that
the prediction error $e_k^n$ is predominantly governed by the absolute difference between two successive estimated functions,
namely $\|F^n-F^{n-1}\|$. The proof of \cref{thm:convrate} is shown in \ref{appendix2}.
According to the estimates, we could establish a suitable stopping criterion as~\eqref{equ:stopcriterion1} and~\eqref{equ:stopcriterion2}.

Additionally, \cref{thm:convrate} could serve as a criterion for method design.
One may observe that $c(\mathcal{G}, \mathcal{H})$
measures the `orthogonality' between $\mathcal{G}$ and $\mathcal{H}$.
In other words, if the two models describe independent features of the problem, the algorithm will converge rapidly.
Otherwise, when the correlation between the two models increases, the convergence speed may be slower.
Thus, if one hypothesis space $\mathcal{G}$ is chosen, we can solve an optimization problem
over the parameterized set of hypothesis spaces $\mathcal{H}_\nu$ with the hyper-parameter \revise{$\nu$}
\begin{equation}
    \label{equ:0_ChoosingModel}
    \min_{\nu}
    c (\mathcal{G}, \mathcal{H_\nu})
\end{equation}
to find a particular hypothesis space $\mathcal{H}_{\nu^*}$ which
maximizes performance.

We now validate these theoretical conclusions with a simple numerical example.

\paragraph{Numerical verification}
In order to evaluate the convergence pattern in theoretical analysis,
we consider a linear PDE in the form of a diffusion equation
\begin{equation}
    \label{equ:1_diffusion}
    \frac{du}{dt} = \mu_1\frac{d^2 u}{dz_1^2} + \mu_2\frac{d^2 u}{dz_2^2}.
\end{equation}
In this equation, $u(z, t)$ is a one-dimensional variable with $z\in \Omega =
[0,1]^2$ and $t\in [0, 0.05]$. The boundary condition is
$u(z, t)$ equals to 0 for all $z$ in the boundary $\partial \Omega$ (i.e., homogeneous Dirichlet boundary conditions). When generating the data, we set $\mu_1 = \mu_2 = 1$ and select the initial value $u(z, 0)$
 from a uniform distribution $U(0,1)$ independently for each $z$.

To validate the relationship between the correlation of two hypothesis spaces and their convergence rate, we fix one of the hypothesis spaces and employ a hyper-parameter $\nu$ to control the degree of correlation in the other space. We choose two parametric hypothesis spaces as follows:
\begin{equation}
    \begin{split}
        \mathcal{G}: F_\mathcal{G}(u_{ij}^n;\lambda_1) = \lambda_1 \delta^2_{z_1} u^n_{ij},\\
        \mathcal{H}: F_\mathcal{H}(u_{ij}^n;\lambda_2) = \lambda_2 (\nu\delta^2_{z_1} u^n_{ij} + \delta^2_{z_2} u^n_{ij}),\\
    \end{split}
\end{equation}
where $\lambda_1, \lambda_2$ are the trainable parameters.
Here, $\delta_{z_1}^2 u,\delta_{z_2}^2 u$
represent the discretization of $\frac{d^2
u}{dz_1^2},\frac{d^2 u}{dz_2^2}$, as
\begin{equation}
    \label{equ: delta_z}
    \begin{split}
        \delta_{z_1}^2 u^n_{ij} = \frac{u^n_{i-1,j}-2u^n_{ij}+u^n_{i+1,j}}{(\Delta {z_1})^2},\\
        \delta_{z_2}^2 u^n_{ij} = \frac{u^n_{i,j-1}-2u^n_{ij}+u^n_{i,j+1}}{(\Delta {z_2})^2}.\\
    \end{split}
\end{equation}
$\Delta z_1$ and $\Delta z_2$ are chosen as $0.1$ in our experiments.

The hyper-parameter \(\nu\) can control the degree of correlation between the two models. 
When the value of \(\nu\) is changed, the correlation between the two models changes correspondingly. 
As the absolute value of \(\nu\) approaches infinity, the hypothesis space \(\mathcal{H}\) of the second model becomes increasingly similar to \(\mathcal{G}\), resulting in a significant correction. 
With varying values of the hyper-parameter \(\nu\), we can obtain corresponding \(\mathcal{H}_\nu\) with different correlations to the fixed model space \(\mathcal{G}\). 
We aim to experimentally validate the relationship between the correlation of these two hypothesis spaces and their convergence rate, as outlined in \cref{thm:convrate}.

By taking the logarithm on both sides of the inequality~\eqref{equ:A_27}, we can obtain
\begin{equation}
    \log(\|F^n-P_{\mathcal{G}\oplus\mathcal{H}}(F)\|)\leq (2n-1)c(\mathcal{G},\mathcal{H}) +\log(\|F\|).
\end{equation}

To validate our theoretical analysis, we conduct experiments with different values of $\nu$ (ranging from -4 to 4).
From~\cref{fig:1_ChoosingModel}, we observe that as the correlation between the two models increases,
the convergence speed becomes significantly slower. This indicates the crucial impact of `orthogonality' on the convergence behavior of our \hm.
Additionally, we used the BFGS method to solve the optimization problem~\eqref{equ:0_ChoosingModel} and obtained the optimal value of $\nu$, represented as $\tilde{\nu} = -0.6455$.
As depicted in \cref{fig:1_ChoosingModel}, \cref{alg:framework} converges directly to the optimal solution,
highlighting the effectiveness of adaptive model selection.

\Cref{fig:1_ChoosingModel_quant} illustrates the convergence rate of \cref{alg:framework} across varying values of the orthogonality parameter $\nu$,
with $n$ denoting the iteration steps.
The two models could be written as inner products of vectors:
\begin{equation}
    \begin{split}
        F_\mathcal{G}(u_{ij}^n;\lambda_1) = \lambda_1\mathbf{b_1}^T  \mathbf{u}\\
        F_\mathcal{H}(u_{ij}^n;\lambda_2) = \lambda_2(\nu\mathbf{b_1} + \mathbf{b_2})^T \mathbf{u}
    \end{split}
\end{equation}
Here, $\mathbf{b_1} = [1, 1,-2,0,0]^T,~\mathbf{b_2} = [0,0,-2,1,1]^T$ and $\mathbf{u} = [u_{i-1,j}^n, u_{i+1,j}^n, u_{i,j}^n, u_{i,j-1}^n, u_{i,j+1}^n]^T$.
We can calculate $c(\mathcal{G},\mathcal{H})$ by~\eqref{equ:1_slope}:
\begin{equation}
    c(\mathcal{G},\mathcal{H}) = \frac{\langle \mathbf{b_1}, \nu\mathbf{b_1} + \mathbf{b_2}\rangle}{\|\mathbf{b_1}\|\|\nu\mathbf{b_1} + \mathbf{b_2}\|} = \sqrt{1 - \frac{5}{9\nu^2+12\nu+9}}.
    \label{equ:1_slope}
\end{equation}
We could easily determine that the optimal value of $\nu$ is $-2/3$ in~\eqref{equ:1_slope}. This value closely aligns with the optimal value $\tilde{\nu}$ obtained through numerical methods.

In~\cref{fig:1_ChoosingModel_quant}, we evaluate both the reference and
experimental convergence rates for varying $\nu$. The reference rate $c(\mathcal{G}, \mathcal{H})$ is computed
using~\eqref{equ:1_slope}, while the experimental rates are determined as the
slopes of $\log(\|F^n-P_{\mathcal{G}\oplus \mathcal{H}}(F)\|)$ relative to $2n-1$.
Comparing our experimental rates with the reference values, we observe a close
match.  These results quantitatively support our theoretical analysis.

\begin{figure}[H]
    \centering
    \begin{subfigure}[b]{0.45\textwidth}
        \centering
        \includegraphics[width = \textwidth]{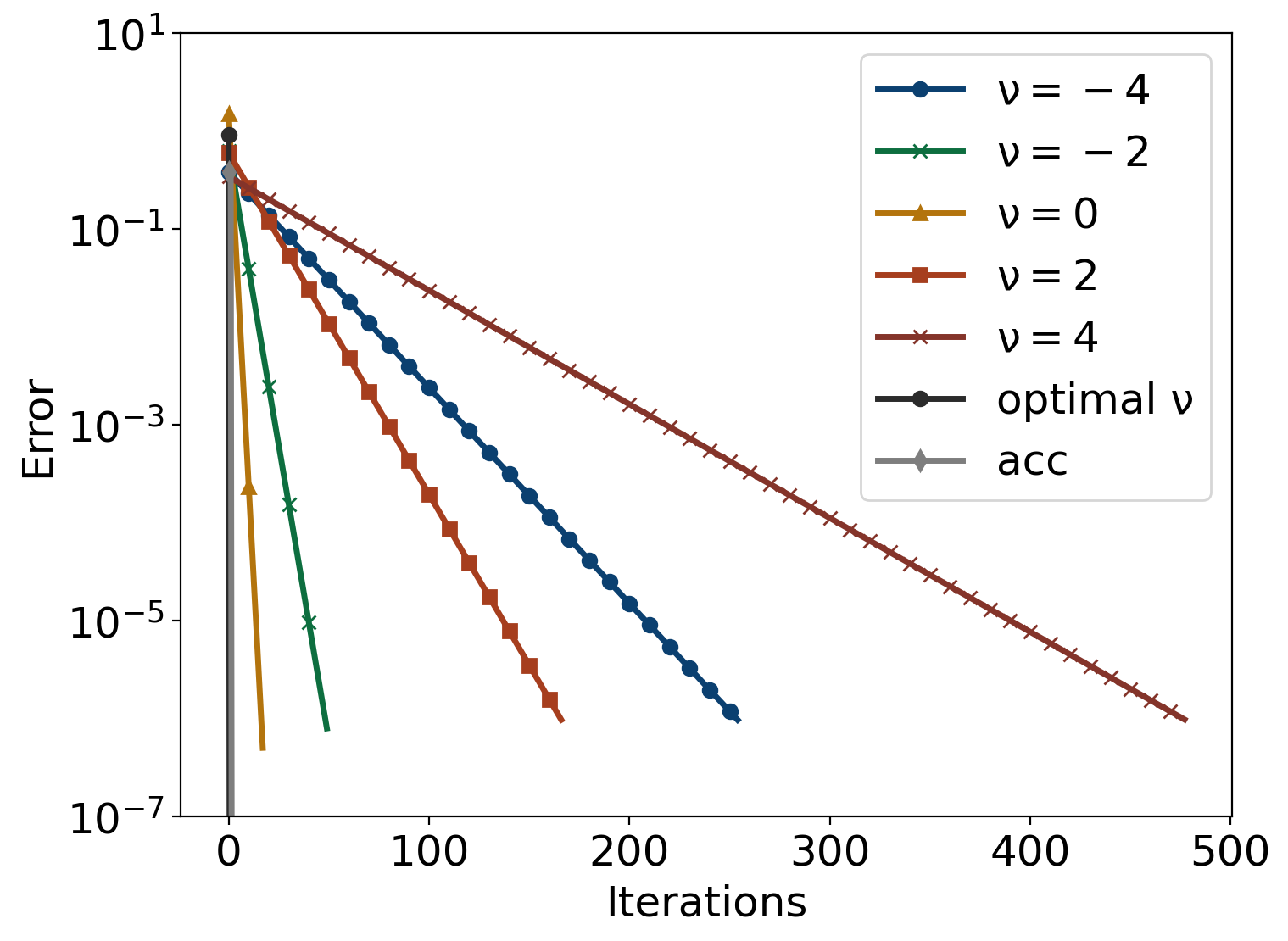}
        \caption{The difference $\|F^n-P_{\mathcal{G}\oplus \mathcal{H}}(F)\|$ after $n$ iterations}
        \label{fig:1_ChoosingModel}
    \end{subfigure}
    \hfill
    \begin{subfigure}[b]{0.45\textwidth}
        \centering
        \includegraphics[width = \textwidth]{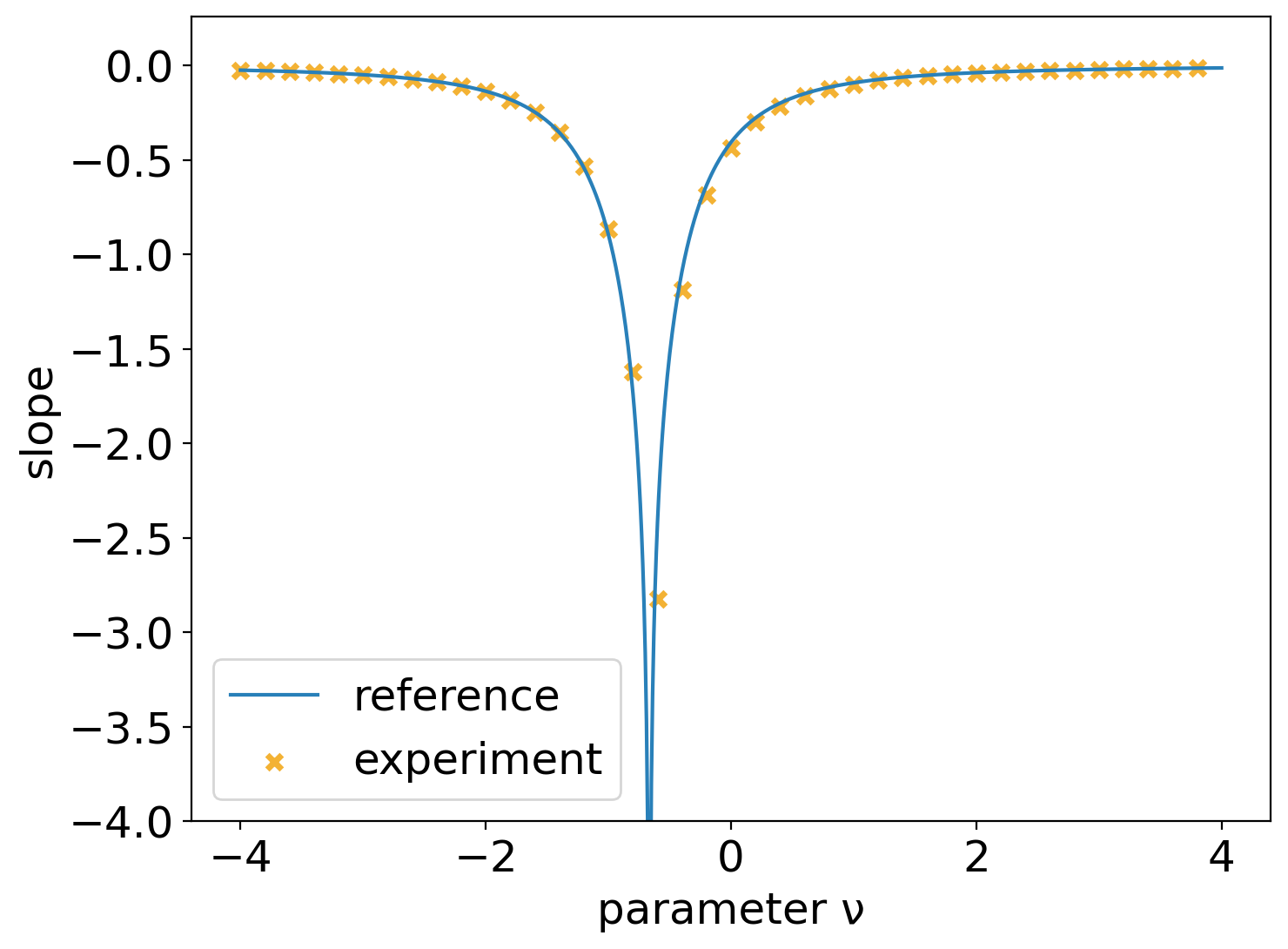}
        \caption{The convergence rate: slope of $\log(\|F^n-P_{\mathcal{G}\oplus \mathcal{H}}(F)\|)$}
        \label{fig:1_ChoosingModel_quant}
    \end{subfigure}
    \caption{Convergence rates for different hyper-parameter values ($\nu$): Left - Iteration differences by original scheme (\cref{alg:framework}) for $\nu=-4, -2, 0, 2, 4$ and optimal values, and that by acceleration scheme (\cref{alg:acc_framework});
    Right - Reference and experimental convergence rates.}
\end{figure}

\subsection{Acceleration scheme}
In \cref{sec:converge}, we have demonstrated that our algorithm is equivalent to an alternative approach involving the projection of the residual $r^n = F-F^n$ onto the complementary spaces $\mathcal{G}^{\bot}$ and $\mathcal{H}^{\bot}$ of $\mathcal{G}$ and $\mathcal{H}$, with a proof in \ref{appendix1}.
Consequently, the residual $r^n$ converges to the projection of $F$ onto the intersection of the complementary spaces $\mathcal{G}^{\bot} \cap \mathcal{H}^{\bot}$.
In a specific case where $F\in \mathcal{G}\oplus\mathcal{H}$, it yields  $P_{\mathcal{G}^{\bot} \cap \mathcal{H}^{\bot}}(F) = 0$.
To accelerate the convergence of $r^n$ towards zero, we introduce an adaptive parameter $t_F$ in front of $(r^n - r^{n-1})$, rather than employing a straightforward update like $r^n = r^{n-1} + (r^n - r^{n-1})$.
In other words, we solve the minimization problem
\begin{equation}
    t_F = \mathrm{argmin}_{\hat{t}_F}(\langle r^{n-1} + \hat{t}_F(r^n - r^{n-1}), r^{n-1} + \hat{t}_F(r^n - r^{n-1})\rangle).
\end{equation}
The optimal parameter $t_F$ is determined as:
\begin{equation}
    t_F = \frac{\langle r^{n-1}, r^{n-1} - r^n\rangle}{\langle r^{n-1} - r^n, r^{n-1} - r^n\rangle}.
\end{equation}
The acceleration algorithm, outlined in \cref{alg:acc_framework}, is based on the scheme proposed and analyzed in~\cite{bauschke2003accelerating},
and
we show here that
it has superior performance when at least one of
the hypothesis spaces is low-dimensional.
In fact, if either $\mathcal{G}$ or $\mathcal{H}$ has only one degree of
freedom, we could claim that $r^n$ and $r^{n-1}$ are guaranteed to be linearly dependent (see a detailed proof in \ref{appendix_acc}).
Hence,  
convergence is achieved with only a single step of acceleration.
Experimental results
in \cref{fig:1_ChoosingModel} validate this finding: regardless of the parameter
$\nu$, \cref{alg:acc_framework} converges after just one step.
Furthermore, the experiment in \cref{sec:example1} shows similar results.

\begin{algorithm}[ht]
    \caption{Acceleration scheme of \hm}\label{alg:acc_framework}
    \begin{algorithmic}
    \Require Data-driven models functioning as projection operators $P_{\mathcal{G}}$ and $P_{\mathcal{H}}$
    \State $n\gets 0$
    \State $F_\mathcal{H}^n\gets P_{\mathcal{H}}(F)$  \Comment{Get the best approximation of $F$ by $P_{\mathcal{H}}$}
    \State $F_\mathcal{G}^n\gets P_{\mathcal{G}}(F-F_\mathcal{H}^n)$    \Comment{Get the best approximation of $F-F_\mathcal{H}$ by $P_{\mathcal{G}}$}
    \State $F^n \gets F^n_{\mathcal{H}} + F^n_{\mathcal{G}}$
    \While{stopping criterion is not satisfied}
    \State $n\gets n+1$
    \State $F_\mathcal{H}^{n}\gets P_{\mathcal{H}}(F-F_\mathcal{G}^{n-1})$  \Comment{Get the best approximation of $F-F_\mathcal{G}$ by $P_{\mathcal{H}}$}
    \State $F_\mathcal{G}^n\gets P_{\mathcal{G}}(F-F_\mathcal{H}^n)$    \Comment{Get the best approximation of $F-F_\mathcal{H}$ by $P_{\mathcal{G}}$}
    \State $F^n\gets F_\mathcal{H}^n + F_\mathcal{G}^n$
    \State $t_F\gets \langle F-F^{n-1}, F^{n-1} - F^n \rangle/\|F^{n-1}-F^n\|^2$    \Comment{Calculate the parameter}
    \State $F_\mathcal{G}^n \gets t_F F_\mathcal{G}^n + (1-t_F) F_\mathcal{G}^{n-1}$  \Comment{Acceleration}
    \EndWhile
    \State $\tilde{F} = F_\mathcal{H}^n + F_\mathcal{G}^n$    \Comment{Obtain the best approximation in $\mathcal{H}\oplus \mathcal{G}$}
    \end{algorithmic}
    \end{algorithm}
\section{Numerical experiments}
\label{sec:experiments}
We conduct experiments in three different problem settings, involving both
predictive and control problems within parameterized dynamical systems.
The selection of models in each experiment should be analyzed on a case-by-case basis, taking into account the specific properties of the problem. The general guideline is to choose models capable of capturing the distinct characteristics of the problem. For instance, it is important to consider whether a part of the problem is linear or nonlinear, pointwise or spatially dependent, stiff or not stiff, etc. In our experiments, we address problems that combine pointwise terms with spatially dependent terms, as well as problems that involve both controlled and uncontrolled inputs.
In the numerical results, we primarily construct linear models using linear regression
and employ Koopman-type models to handle the nonlinear aspects.  We have chosen
Koopman models as an example in our numerical results due to the
well-established data-driven methods available for Koopman models and the nature
of the Koopman operator, which enables the transformation of nonlinear dynamical
systems into a linear, infinite-dimensional representation that can be further
truncated.

\subsection{Basics on Koopman models}
We now give a brief introduction of Koopman operator-based methods
to learn dynamical systems.
The reader is referred to~\cite{brunton2021modern} for a more comprehensive review.
Consider a nonlinear dynamical system $x_{n+1} = F(x_n)$ where $x_i\in X\subset
\mathbb{R}^k$ and $F: X\to X$ is a nonlinear function.
Instead of the dynamics on state spaces,
the Koopman operator approach considers
the dynamics on the space of observables,
consisting of functions $\psi : X \to \mathbb{R}$ (or $\mathbb{C}$).
We observe that if we consider the set of all such observables,
its time evolution is
$\psi(x_n) \mapsto \psi({x_{n+1}}) = \psi(F(x_{n})) \equiv \mathcal{K}\psi (x_{n})$,
where $\mathcal{K}$ is the Koopman operator, a linear operator
defined by
\begin{equation}
    \mathcal{K} \psi = \psi \circ F.
\end{equation}
To obtain a finite-dimensional representation of the Koopman operator,
we construct a finite subspace spanned by a set of
dictionary functions $\Psi = \{\psi_1, \psi_2, \psi_3, \ldots, \psi_N\}$.
Assuming this is an approximately $\mathcal{K}$-invariant subspace,
we obtain a finite-dimensional approximation of the Koopman
operator
\begin{equation}
    \Psi (x_{n+1}) = K\Psi(x_n).
\end{equation}

Data-driven Koopman models have attracted significant attention in recent years, and several mature models for learning nonlinear dynamics have been proposed, such as Extended Dynamic Mode Decomposition (EDMD)~\cite{williams2015data} and machine-learning aided
extensions~\cite{li2017extended,NIPS2017_3a835d32, lusch2018deep}.
Moreover, extended structures for parameterized Koopman
models~\cite{williams2015data,guo2023learning} and structures for control
problems~\cite{guo2023learning,korda2018linear} have been explored.
The process of approximating the evolution function can be described as a
projection onto a function subspace spanned by the observables in the
dictionary. In our experiments, the dictionary $\Psi$ should at least have the information of the state $x_n$. In other words, we could extract the state from the dictionary functions:
\begin{equation}
    x_{n+1} = g (K\Psi(x_n)).
\end{equation}
We choose the dictionary $\Psi(x)$ as $[1, x, \Phi(x)]$, where $\Phi$ represents selected or trainable observables (e.g., polynomials, radial basis functions, or neural networks). Thus, the function $g$, which aims to extract the state, can be represented as a partitioned matrix $[0, I, 0]$.


We now discuss the numerical results.  For the first two experiments, we compare
our model combination algorithm with models relying on each individual
hypothesis spaces $\mathcal{G}$ and $\mathcal{H}$, as well as residual learning.
This comparison demonstrates that our method achieves superior accuracy through
model combination. In the last experiment, we shift our focus to the advantages
of the hybrid structure in control problem and highlight the benefits of
accuracy and computational time resulting from model combination.

\subsection{Reaction-diffusion equation}
\label{sec:example1}
\paragraph{Problem}
We first return to the problem described in~\eqref{equ:0_reaction_diffusion}.
Our objective is to predict
the trajectory accurately, with the information that there is a diffusion term
and a pointwise nonlinear reaction term, and we use different hypothesis spaces
for each part.
The initial condition $u(z_i, 0)$ is drawn from a normal distribution with mean 0
and standard deviation 1 and the boundary condition is defined as $u(0,t) =
u(1,t) = 0$. To generate data for this problem, we discretize and simulate the following equation:
\begin{equation}
\frac{u^{n+1}_j - u^n_j}{\Delta t} = \mu\frac{u^n_{j+1}-2u^n_j+u^n_{j-1}}{(\Delta z)^2} + R_\eta(u^n_j)
\end{equation}
where $z_j = j/n, 1\leq j\leq n=20, u_j^n = u(z_j, t_n)$ and $R_\eta(u) = \frac{\partial }{\partial u} \left(\eta(u-1)^2(u+1)^2\right)$. In our experiments, $\mu=1$ and $\eta = \frac{1}{4}$. The time step is $\Delta t =0.0001s$.
The initial values \(u(z_j, 0)\) are chosen to follow a uniform distribution in the interval \([-1,1]\).

\subsubsection{Koopman-based model}
\label{sec:example1_koopman}
\paragraph{Methods} According to the discussions in the second paragraph of \cref{sec:illustrate_example}, the reaction-diffusion equation comprises two components: the diffusion term and the reaction term. The diffusion term could be modelled as a linear model, but it is spatially dependent. On the other hand, the reaction term could be highly nonlinear but is pointwise. Our goal is to select suitable models to approximate these two parts and then combine them. Thus, we choose two
hypothesis spaces $\mathcal{G}$, $\mathcal{H}$: one as a
linear regression model for the diffusion term,
and the other as a Koopman operator-based model for the reaction term:
\begin{equation}
    \begin{split}
        \mathcal{G}: F_\mathcal{G}(u_{j},\mu) & = \mu\frac{u_{j+1}-2u_{j}+u_{j-1}}{(\Delta z)^2},\\
        \mathcal{H}: F_\mathcal{H}(u_{j}, K) & = g\circ K\psi(u_{j}).
    \end{split}
\end{equation}
In this context, $\Psi$ represents the dictionary of the Koopman invariant
subspace, and $g$ is the function used to extract states from the dictionary.
In this experiment, we choose the dictionary as a
polynomial dictionary $\Psi = [1, u, u^2, u^3, \dots, u^{10}]$. For the training
data, we randomly choose 500 trajectories, and simulate for 10 time steps by the
forward Euler method.

\paragraph*{Results} \Cref{fig:1_IterationError} shows that the iteration difference $\|F^n - P_{\mathcal{G}\oplus \mathcal{H}}(F)\|$ by \cref{alg:framework} decreases exponentially.
Also, the acceleration scheme (\cref{alg:acc_framework}) is more efficient in approximating the reaction-diffusion problem and reaching a solution in one iteration.
To further elucidate the separation of the diffusion term and the reaction term in our scheme, we display the approximation of the reaction term $R(u)$ at various epochs (5, 10) during the training process, as well as the converged solution.
\Cref{fig:1_ParaError} and \cref{fig:1_R} emphasize how our scheme effectively separates and captures the dynamics of these two components within the problem.
These results support our theoretical analysis on the convergence of our \hm\
and demonstrate its effectiveness in approximating the reaction-diffusion
problem.
\begin{figure}[ht]
    \centering
    \begin{subfigure}[b]{0.3\textwidth}
        \centering
        \includegraphics[width = \textwidth]{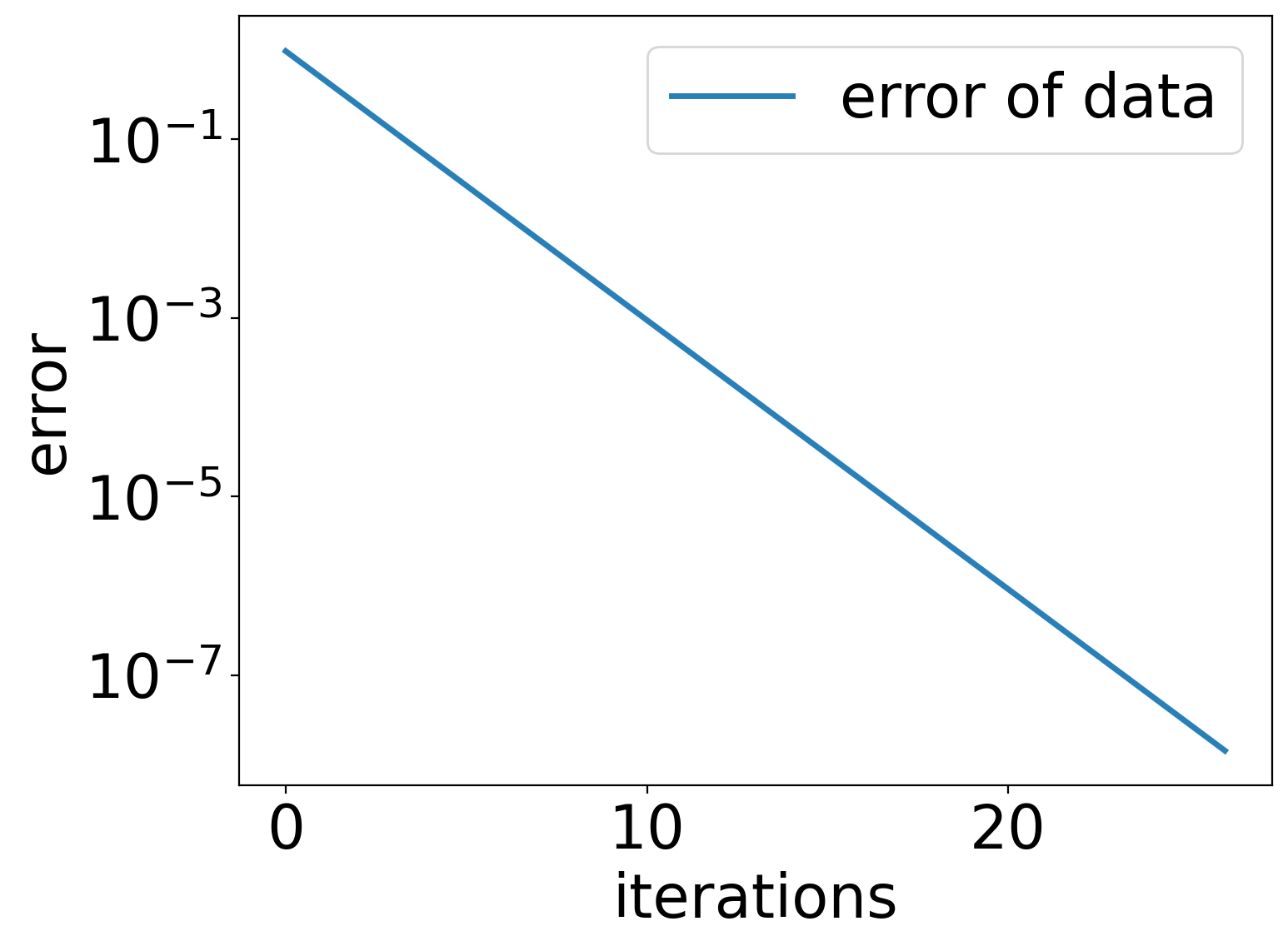}
        \caption{Iteration difference:\\ $\|F-F_\mathcal{G}-F_\mathcal{H}\|$}
        \label{fig:1_IterationError}
    \end{subfigure}
    \hfill
    \begin{subfigure}[b]{0.3\textwidth}
        \centering
        \includegraphics[width = \textwidth]{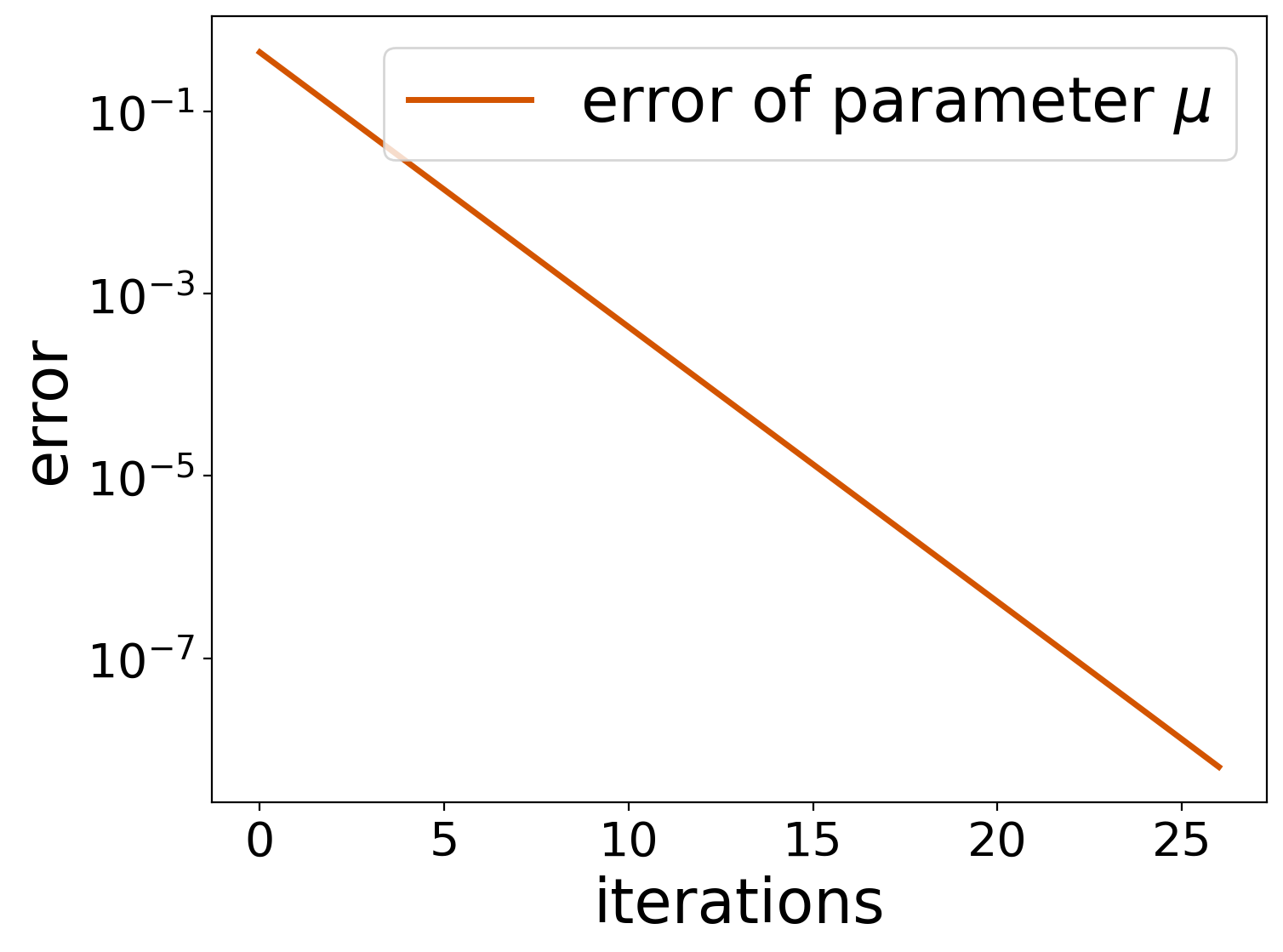}
        \caption{Error of parameter:\\ $\|\mu-\mu_{\mathrm{ref}}\|$}
        \label{fig:1_ParaError}
    \end{subfigure}
    \hfill
    \begin{subfigure}[b]{0.31\textwidth}
        \centering
        \includegraphics[width = \textwidth]{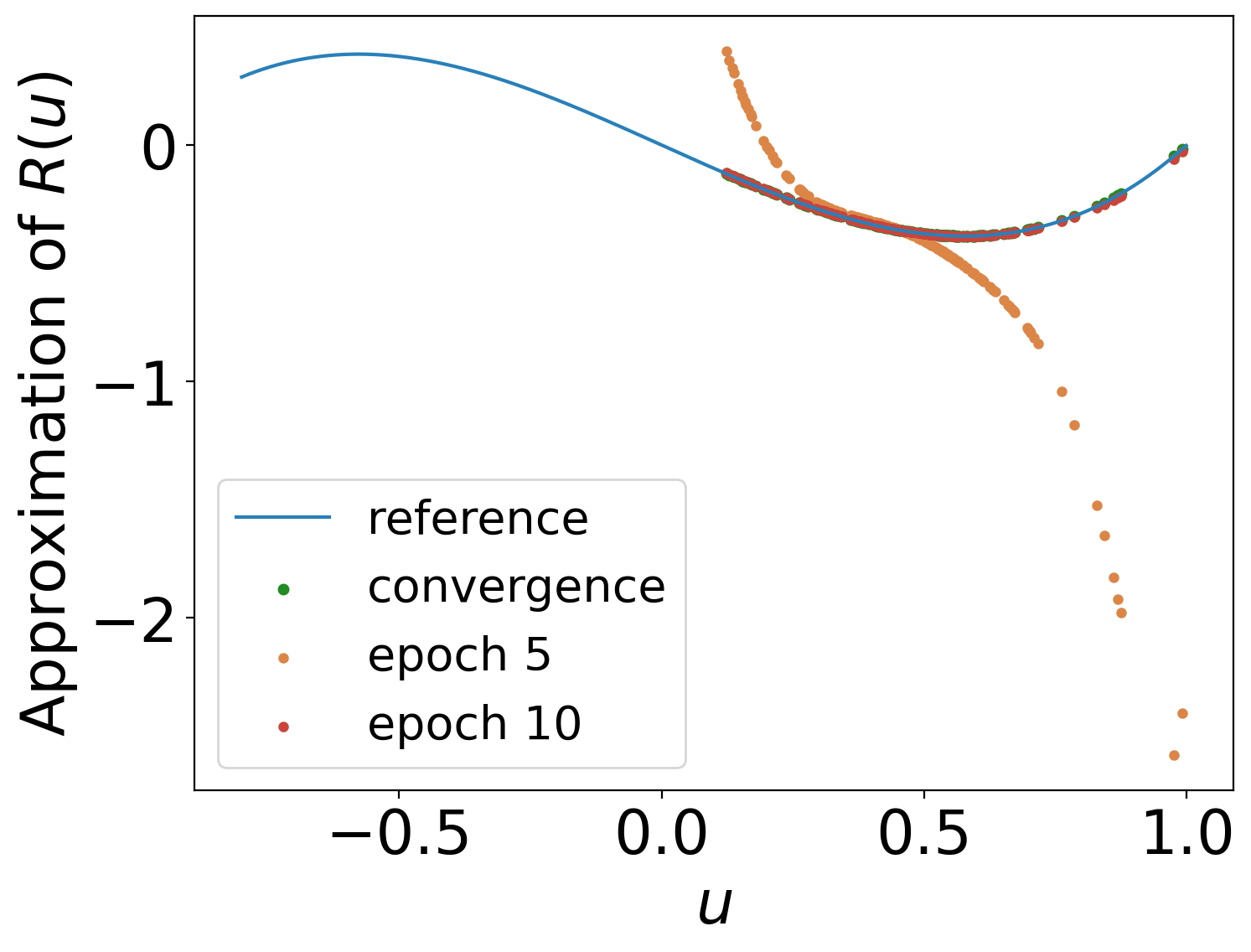}
        \caption{Convergence trajectory \\ of $R(u)$}
        \label{fig:1_R}
    \end{subfigure}
    \caption{Convergence trajectory of reaction-diffusion.}
    \label{fig:1_convergence}
\end{figure}

For the long-time prediction evaluation, we randomly select 50 initial points
$u_0$ and predict their evolution over 1000 time steps using our \hm.
We then compare the performance of our scheme with \text{three} other methods: using the Koopman operator approach with the same dictionary alone, employing a residual-learning approach that involves conducting linear regression first and then using the Koopman operator for correction, and employing gradient descent to train the two parts of the combined model together in an intrusive manner.

In \cref{fig:1_predict}, the lines represent the average error of the 50
predicted trajectories, while the shaded areas represent the standard deviation
of the estimates. Here, the error is defined as
\begin{equation}
    \mathrm{error}_k = \frac{\|u_k^{\mathrm{predict}}-u_k^{\mathrm{ref}}\|}{\|u^{\mathrm{ref}}\|},
\end{equation}
where $u_k^{\mathrm{predict}}$ represents predicted values at the $k$-th time step, and $u_k^{\mathrm{ref}}$ represents simulation values at the $k$-th time step. $u^{\mathrm{ref}}$ is constructed as $[u_0^{\mathrm{ref}}, u_1^{\mathrm{ref}}, u_2^{\mathrm{ref}}, \ldots, u_{50}^{\mathrm{ref}}]$.
 \Cref{fig:1_predict} demonstrates that combination of Koopman operator and linear regression methods can have better accuracy than baselines in prediction problems.
\begin{figure}[ht]
    \centering
    \includegraphics[width = 0.9\textwidth]{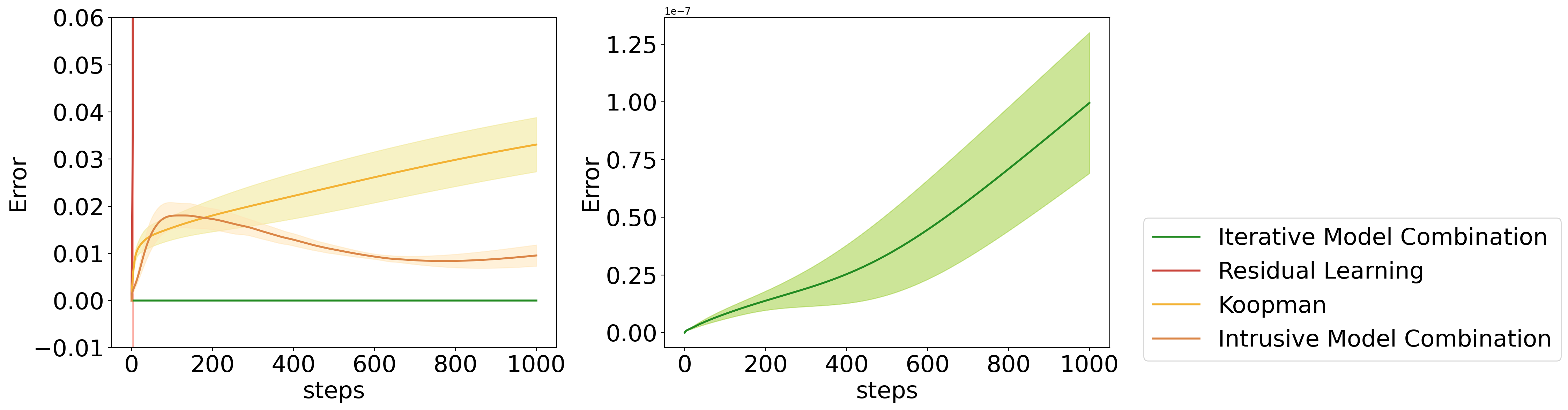}
    \caption{Long-time prediction of the reaction-diffusion solution using Koopman-based model.}
    \label{fig:1_predict}
\end{figure}

\Cref{table:1_relative_error} shows the relative error of four methods over the prediction domain,
which is the discretization of
\begin{equation}
    e_{\Omega \times [t_1, t_2]} = \frac{\sum_{(z,t)\in \Omega \times [t_1, t_2]}\|u^{\mathrm{predict}}(z,t)-u^{\mathrm{ref}}(z,t)\|}{\sum_{(z,t)\in \Omega \times [t_1, t_2]}\|u^{\mathrm{ref}}(z,t)\|}.
\label{equ:relative_error}
\end{equation}
In our experiment, $\Omega=[0,1]$, $t_1 = 0$, and $t_2 =
0.1$.
\Cref{table:1_relative_error} shows that both the Koopman model and the linear model perform poorly when used individually.
The introduction of residual learning, while resulting in some accuracy improvement at the first step, suffers from the instability in the Euler forward method, leading to rapid divergence.
In this example, both data generation and iterative model combination employ a step method equivalent to the Euler forward method. This allows our model combination to approach the best solution over numerous iterations. However, training the model directly with gradient descent achieves only a near-optimal solution. This leads to improvements in long-term predictions but remains sub-optimal.

\begin{table}[ht]
    \centering
    \begin{tabular}{ll}
    \hline \hline
        & Relative Error \\ \hline
    Linear regression      & 99.7790\%      \\
    Koopman Operator      & 74.8663\%        \\
    Residual learning          & $\geq$ 100\%         \\
    Intrusive model combination  & 38.7776\% \\
    Iterative model combination algorithm & \textbf{0.0001\%} \\
    \hline
    \end{tabular}
    \caption{Relative error over the prediction domain $[0,1]\times [0, 0.1]$.}
    \label{table:1_relative_error}
\end{table}
\subsubsection{Neural Network model}
\label{sec:example1_NN}
\paragraph*{Methods}
In the previous experiment, we modeled the reaction-diffusion equation using the Euler-forward scheme. In this experiment, we compare different numerical schemes for modeling the reaction-diffusion equation.
We choose the linear model for the diffusion term and a neural network model for the reaction term:
\begin{equation}
    \begin{split}
        \mathcal{G}: F_\mathcal{G}(u_{j},\mu) & = \mu\frac{u_{j+1}-2u_{j}+u_{j-1}}{(\Delta z)^2},\\
        \mathcal{H}: F_\mathcal{H}(u_{j}, \theta) & = NN(u_j;\theta).
    \end{split}
\end{equation}
Here, $NN(u_j;\theta)$ represents a neural network with parameters $\theta$. We construct it as a fully connected network with 2 hidden layers, each with 64 nodes. The activation function is chosen as ReLU. The input of the neural network is the state $u_j$, and the output is a rescaled approximation of the reaction term $R(u_j)$. Here, the hypothesis space $\mathcal{H}$ is no longer a linear subspace.

For different numerical schemes, the combined model is built as follows:
\begin{itemize}
    \item Forward Euler: $u^{n+1}_j = u^n_j + F_\mathcal{G}(u^n_j, \mu) + F_\mathcal{H}(u^n_j, \theta)$
    \item Backward Euler: $u^{n+1}_j = u^n_j + F_\mathcal{G}(u^{n+1}_j, \mu) + F_\mathcal{H}(u^{n+1}_j, \theta)$
    \item Central difference: $u^{n+1}_j = u^n_j + \frac{1}{2}(F_\mathcal{G}(u^n_j, \mu) + F_\mathcal{G}(u^{n+1}_j, \mu)) + \frac{1}{2}(F_\mathcal{H}(u^n_j, \theta) + F_\mathcal{G}(u^{n+1}_j, \theta))$
\end{itemize}
For the training data, we randomly select 500 trajectories and simulate each for 10 time steps using the RK45 method.
\paragraph*{Results}
In this experiment, we will obtain the optimal solution after only one iteration of training. The convergence trajectories are shown in \cref{fig:1_training_trajectory_different_scheme}. Higher-order numerical schemes, such as the central difference method, will achieve a lower iteration difference compared to other methods.
\begin{figure}[ht]
    \centering
    \includegraphics[width = 0.45\textwidth]{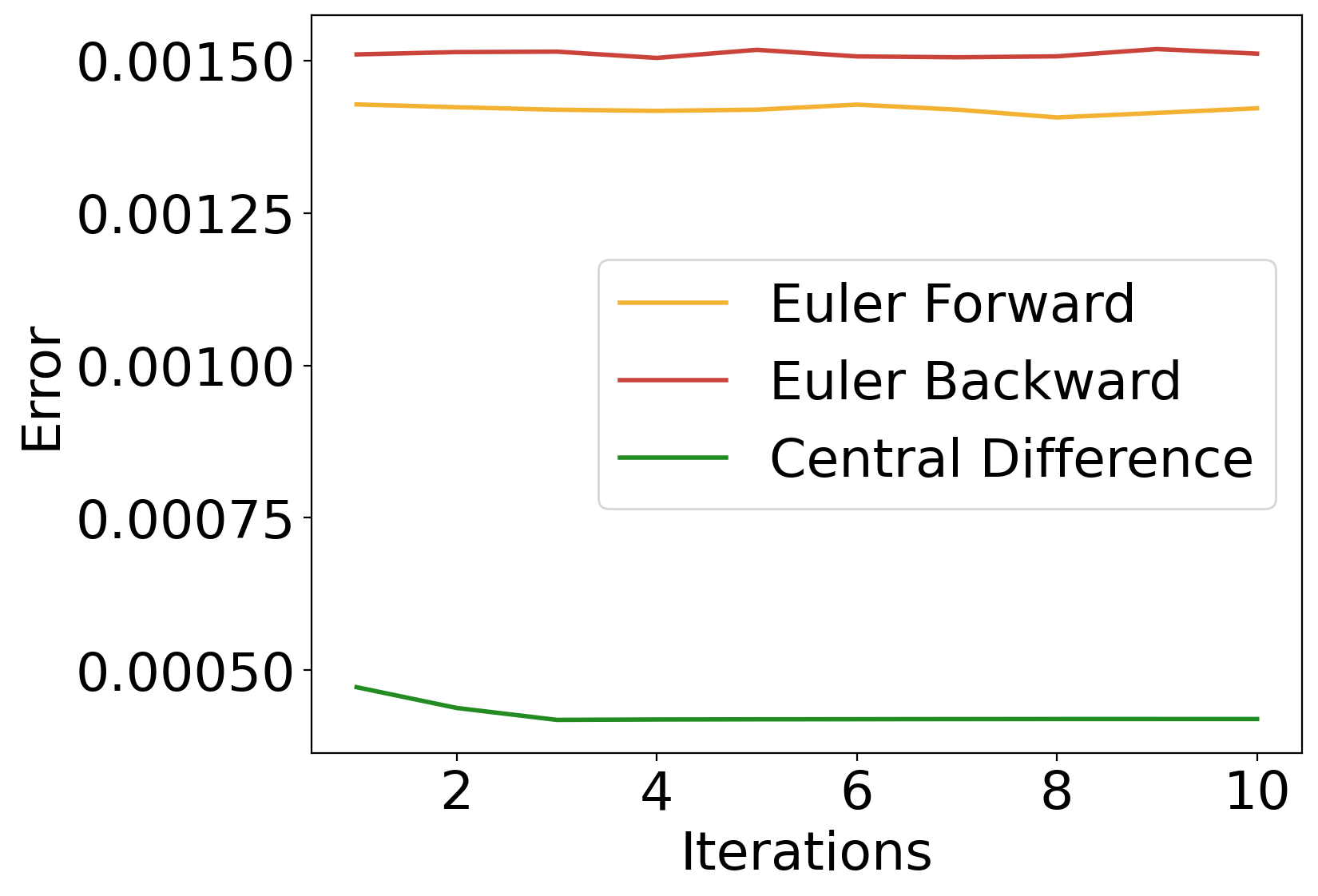}
    \caption{Iteration difference $\|F-F_\mathcal{G}-F_\mathcal{H}\|$ using different numerical schemes.}
    \label{fig:1_training_trajectory_different_scheme}
\end{figure}

For long-time prediction evaluation, we randomly select 10 initial points
$u_0$ from a uniform distribution ranging between -1 and 1
and predict their evolution over 1000 time steps using the three different numerical schemes.
\Cref{fig:1_predict_different_scheme} shows that higher-order numerical schemes (such as the central difference method) achieve better accuracy compared to the forward and backward Euler schemes. Our method is adaptable to various numerical schemes and model spaces.
\begin{figure}[ht]
    \centering
    \includegraphics[width = 0.9\textwidth]{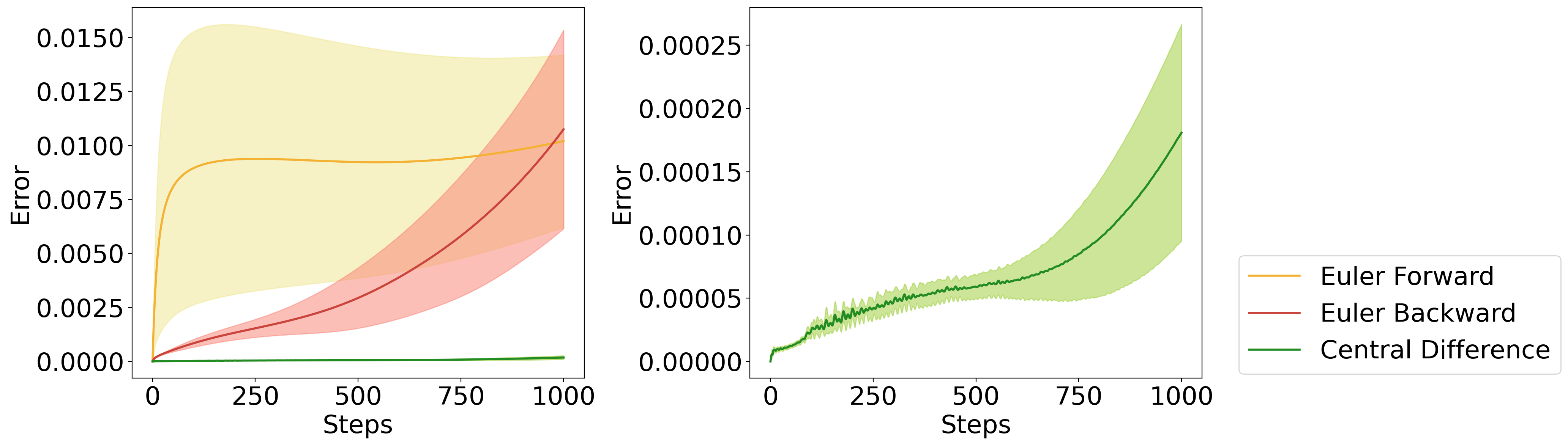}
    \caption{Long-time prediction of the reaction-diffusion solution using different numerical schemes.}
    \label{fig:1_predict_different_scheme}
\end{figure}

\subsection{Cardiac electrophysiology model}
\label{sec:example2}
\paragraph*{Problem}
Data-driven modelling of partially-known dynamical systems
is a common task in science and engineering.
One example is a PDE-ODE coupled system
in cardiac electrophysiology~\cite{sundnes2007computing,automatedPDEODE}:
\begin{equation}
    \begin{split}
        v_t-\nabla\cdot(\Lambda\nabla v)&=I_s-I_{ion}(v,s)\quad \mathrm{in}\ \Omega \times (0,T),\\
        s_t&=F(v,s)\quad \mathrm{in}\ \Omega \times(0,T).
    \end{split}
    \label{equ:ce}
\end{equation}
Here, the transmembrane potential $v$ is a one-dimensional variable, and $s$ is
a three-dimensional one, representing the gating variable of currents.
$\Lambda$ is the conductivity tensor of the mono-domain equation and $I_s$ is a
given stimulus current. The term $I_{ion}$ is the ionic current describing a
response of the cardiac cell with the gating variable $s(z,t)$ which evolves
according to~\eqref{equ:ce}. The functions $I_{ion}$ and $F$ are specific to a
given cell model as they describe the cell's dynamics. However, understanding
which properties of the cell are relevant for its electrical behavior and how
these properties interact is a challenging problem. According
to~\cite{mitusch2021hybrid}, it is thus reasonable to consider a setting where
$F$ and $I_{ion}$ would be learned from data of a specific patient. Researchers
have explored the use of neural networks to approximate the unknown components
and integrate them with the incomplete physics model in partially-known
systems~\cite{mitusch2021hybrid}. However, these methods rely on the careful
design of a physical model, particularly in~\eqref{equ:ce}, where the accurate
selection of the matrix $\Lambda$ is essential. Instead of fixing it in advance,
we choose it adaptively using \cref{alg:framework}.  Thus, in this experiment,
we suppose the matrix $\Lambda$ and the functions $I_{\mathrm{ion}}(v,s)$ and
$F(v,s)$ are unknown. The objective is to predict the trajectory of $v$ and $s$
accurately with new stimulus $I_s$.

\paragraph*{Methods}
We observe that the coupled system~\eqref{equ:ce} is composed of a reaction-diffusion equation for $v$ and an ordinary differential equation for $s$. Additionally, $v$ is influenced by the external forcing term $I_s$. Similar to the guideline of choosing methods in \cref{sec:example1_koopman}, we choose two models with hypothesis spaces $\mathcal{G}$, $\mathcal{H}$ according to properties of this specific problem: one as linear regression of fusion terms
\begin{equation}
    \begin{split}
    \mathcal{G}: F_\mathcal{G}(x(z_{i,j},t_k)) = [\mu_1 \frac{v(z_{i+1, j},t_k) + v(z_{i-1, j},t_n)-2 v(z_{i, j},t_n)}{(\Delta z)^2} \\
    + \mu_2 \frac{v(z_{i, j+1},t_k) + v(z_{i, j-1},t_k)-2 v(z_{i, j},t_k)}{(\Delta z)^2}, 0, 0, 0]^T,
    \end{split}
\end{equation}
another as Parametric Koopman Operator~\cite{guo2023learning} to tackle the reaction term with outside inputs $I_s$ for $v$ and the ODE for $s$:
\begin{equation}
    \mathcal{H}: F_\mathcal{H}(x(z_{i,j}, t_k)) = g\circ K(I_s(z_{i,j}, t_k))\Psi(y(z_{i,j}, t_k)),
\end{equation}
where $x(z_{i,j}, t_k)=[v(z_{i,j}, t_k),s(z_{i,j}, t_k)]$ and $g$ is the function used to extract states from the dictionary.

To obtain the representation of the operator $K(I_s)$, we construct a
ResNet~\cite{he2016deep} $NN_K(I_s)$, comprising two hidden layers, each with 32
nodes. The input of $NN_K (I_s)$ is the stimulus, and the output consists of the
elements of the matrix $K(I_s)$. Subsequently, we reshape the output of the
neural network to obtain the matrix $K(I_s)$.  Additionally, we utilize a
trainable dictionary~\cite{li2017extended} in our experiments. The dictionary is
constructed using a ResNet denoted as $NN_{\Psi}(x)$, which consists of 2 hidden
layers, each with 64 nodes. The input of $NN_{\Psi}(x)$ is the state of dynamics
$x$, and the output is a group of observables of $x$. We construct the
dictionary $\Psi(x)$ as
\begin{equation}
    \Psi(x) = [1, x, NN_{\Psi}(x)].
\end{equation}
In our experiment, the dictionary $\Psi(x)$ has a total dimension of 27. The structures of neural networks are illustrated in \cref{fig:2_structure}. In this experiment, we let $\Omega =[10]^2$ and $T = 5$. The domain $\Omega$ is discretized with a mesh size  $\Delta z = 0.2$, and we simulate the coupled system using a time step $\Delta t = 0.2$. For the training data, we define the inputs $I_s$ as follows:
\begin{equation}
I_s(z,t) = d(z_1, z_2)\sin(t),
\end{equation}
where $d(z_1, z_2) = z_1 + 10 - z_2$.
During the training process, we initially perform pre-training on the dictionary network $NN_{\Psi}(x)$ and freeze the parameters in $NN_{\Psi}(x)$.
Subsequently, we iteratively optimize the parameters $\mu_1$ and $\mu_2$ using linear regression and train the operator network $NN_K(I_s)$ using the Adam optimizer.
In this experiment, due to the nonlinear structure of operator network $NN_K(I_s)$, it is no longer a linear problem as before.

\begin{figure}[ht]
    \centering
    \includegraphics[width = 0.8\textwidth]{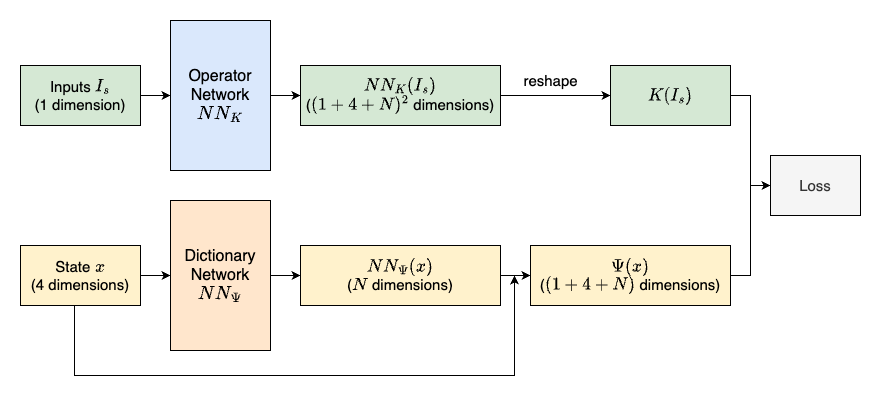}
    \caption{Structure of the parametric Koopman model.}
    \label{fig:2_structure}
\end{figure}

\paragraph{Results}
\Cref{fig:2_convergence} shows the convergence patterns of our schemes.  We
observe that initially the iteration error decreases rapidly.  However, since
the neural networks for the Koopman operator could only obtain an approximate
projection $P_\mathcal{H}(F-F_\mathcal{G})$,
the error exhibits slight fluctuations thereafter.  Also, the
acceleration scheme (\cref{alg:acc_framework}) has faster
convergence than the original scheme (\cref{alg:framework}).  Furthermore, the
relative error of parameters decreases to less than 10\% within a few epochs.
\begin{figure}[H]
    \centering
    \begin{subfigure}[b]{0.45\textwidth}
        \centering
        \includegraphics[width = \textwidth]{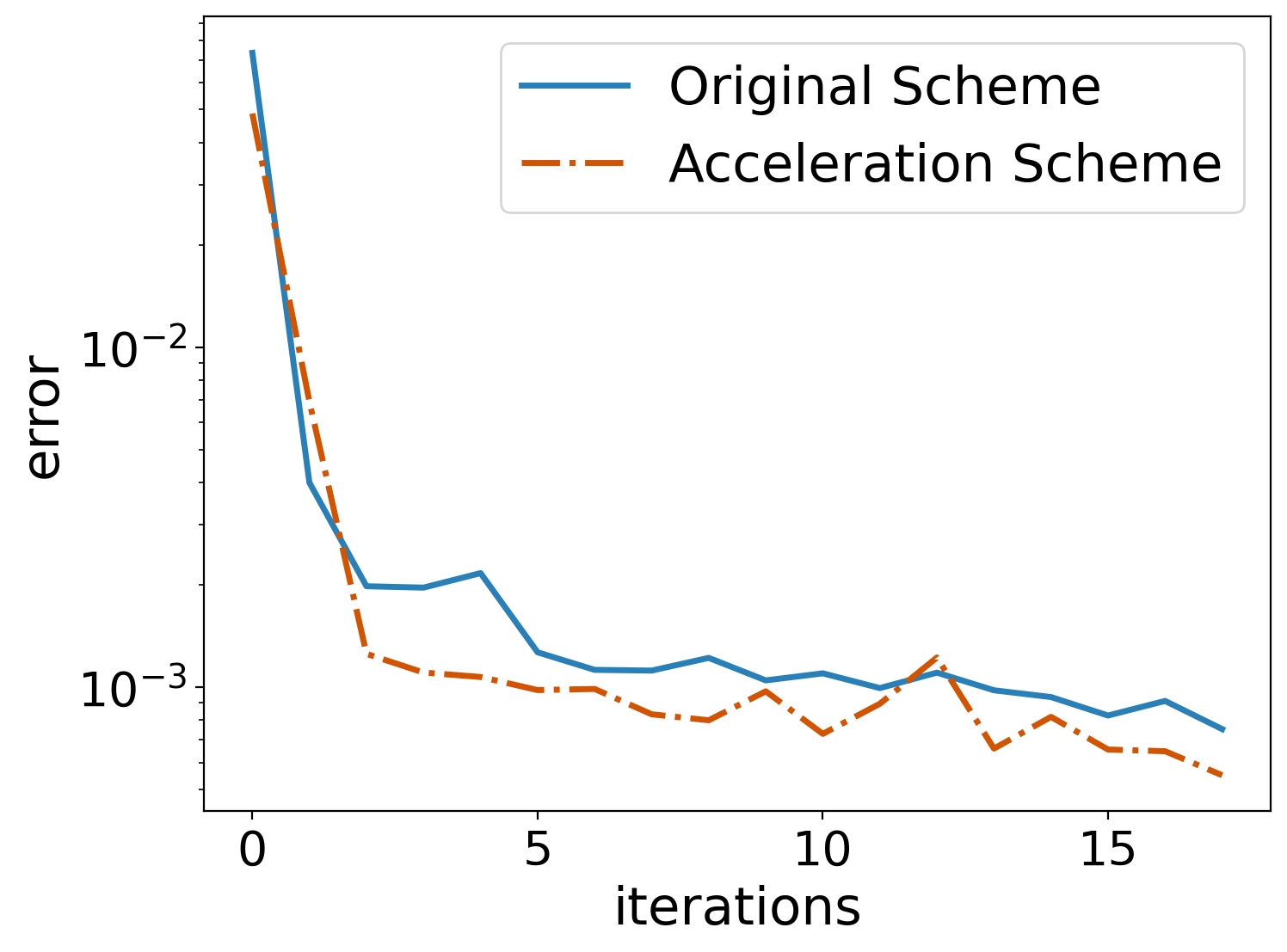}
        \caption{Iteration difference:\\ $\|F-F_\mathcal{G}-F_\mathcal{H}\|$}
        \label{fig:2_IterationError}
    \end{subfigure}
    \hfill
    \begin{subfigure}[b]{0.45\textwidth}
        \centering
        \includegraphics[width = \textwidth]{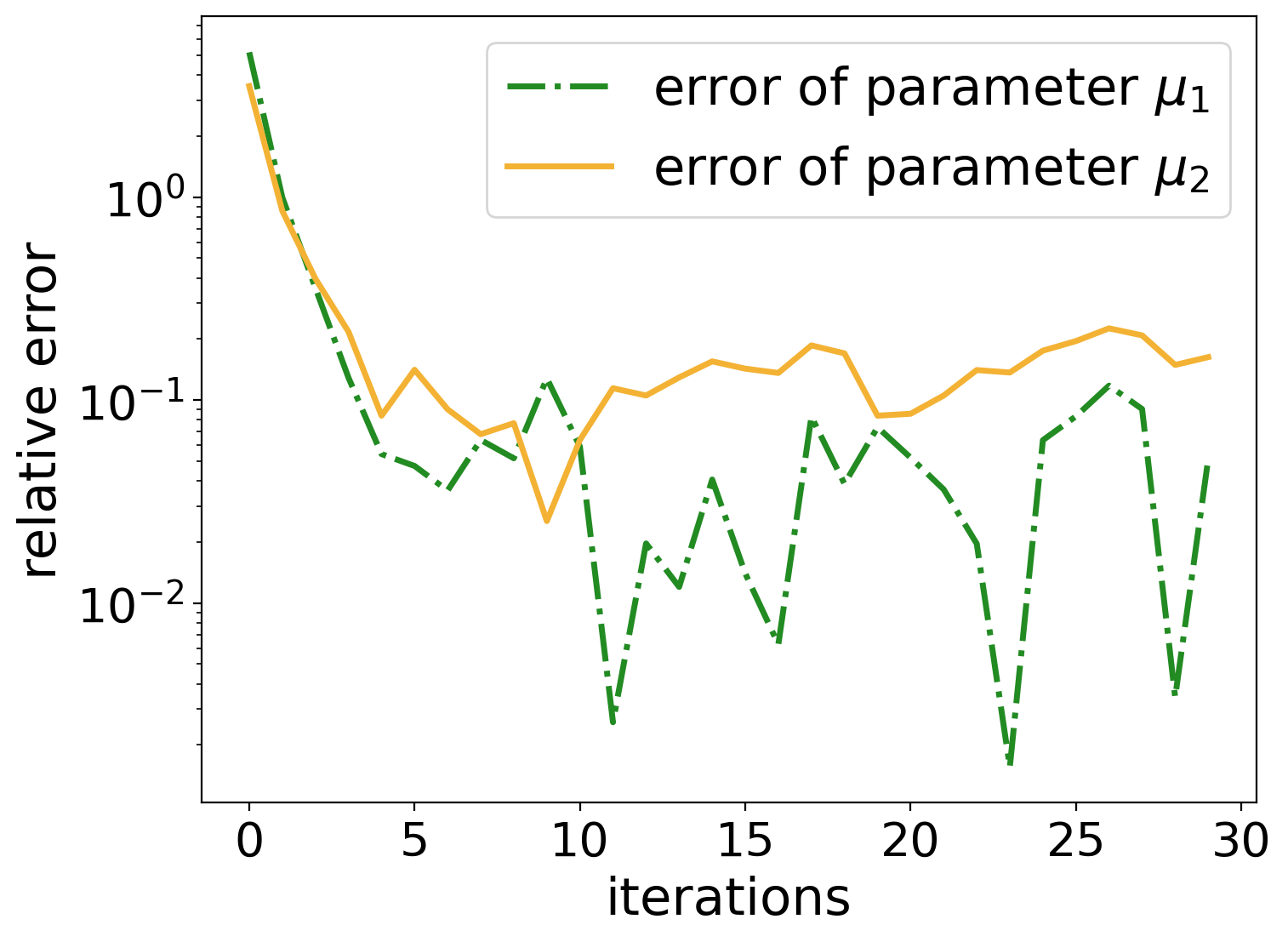}
        \caption{Error on parameter:\\ $\|\mu-\mu_{\mathrm{ref}}\|$}
        \label{fig:2_ParaError}
    \end{subfigure}
    \caption{Convergence trajectory of the cardiac model.}
    \label{fig:2_convergence}
\end{figure}

The predicted states are depicted in \cref{fig:2_train_predict}.
These predictions utilize the same initial conditions and external stimulus as those employed during training.
We chose the spatial coordinates $z = (2.8, 9.8)$ to present the predicted results because the system trajectory exhibits significant nonlinearity at this point.
We plot the predicted states at the spatial coordinates $z = (2.8, 9.8)$ in the top figures.
Both the transmembrane potential $v$ and the gating variable $s$ closely approximate the reference values.
Furthermore, the spatial distribution of the transmembrane potential $v$ at $t=3$ demonstrates the superior performance of our methods compared to using the parametric Koopman model alone.
Quantitatively, the relative error, defined in~\eqref{equ:relative_error}, in $\Omega \times [0,3]$ of the transmembrane potential $v$ and the gating variable $s$ is shown in \cref{table:2_relative_error}.
In this case, the multi-step predictive states of linear regression strictly equal the initial conditions, as the initial conditions for $v$ and $s$ over the spatial domain $\Omega$ are chosen to be constant.
Furthermore, since the iteration scheme in model combination employs the forward-Euler scheme of a dynamical system, which is not unconditionally stable,
the residual learning in this experiment is significantly affected by stiffness, causing the error to explode after a few steps.
When comparing our method to the parametric Koopman model, we can observe a significant performance improvement in our approach.
We also compare the performance of our method with the intrusive model combination approach, which involves directly training the two parts of the combined model together using the Adam optimizer.
The results demonstrate that our method achieves superior accuracy compared to the intrusive method.
This is attributed to our ability to more carefully address the rescaling problem during training by separating the two parts of the model.
This careful handling of rescaling represents a potential advantage of our approach when combining models with different scales.

\begin{figure}[ht]
    \centering
    \includegraphics[width = \textwidth]{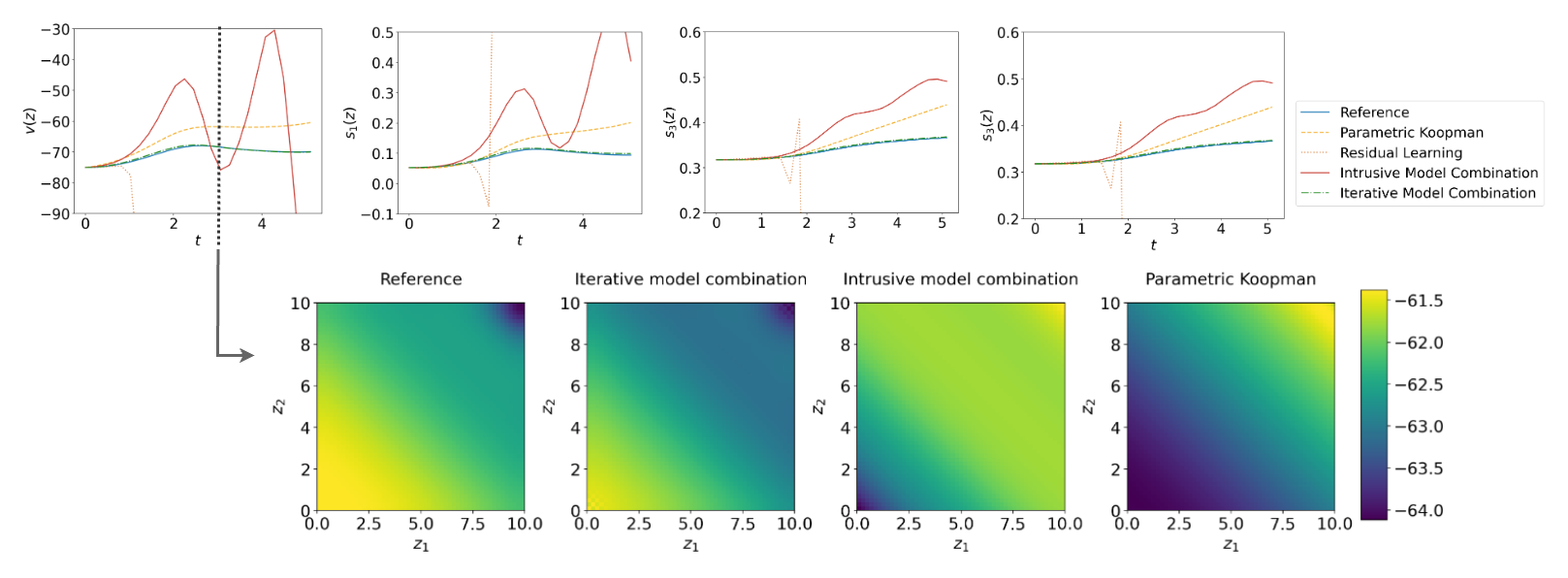}
    \caption{Predicted states with same initial conditions and external stimulus.
    Top: Predicted states at spatial coordinates $z = (2.8, 9.8)$.
    Bottom: Spatial distribution of the transmembrane potential $v$ at $t=3$. Left: the reference distribution; Middle: by \hm; Right: by Parameterized Koopman.}
    \label{fig:2_train_predict}
\end{figure}

\begin{table}[ht]
    \centering
        \begin{tabular}{lllll}
        \hline \hline
                                     & $v$                 & $s_1$                & $s_2$             & $s_3$                \\ \hline
        Linear regression      & 10.4159\%         & 58.5347\%         & 8.2891\%          & 8.2365\%          \\
        Parametric Koopman Model & 4.3523\%          & 13.4181\%         & 3.0881\%          & 1.9963\%          \\
        Residual learning            & $\geq 100\%$        & $\geq 100\%$   & $\geq 100\%$    & $\geq 100\%$  \\
        Intrusive model combination & 23.4798\% & 95.9591\% & 24.1986\% & 9.5839\% \\
        \Hm & \textbf{0.3269\%} & \textbf{1.6881\%} & \textbf{0.5201\%} & \textbf{0.2469\%} \\ \hline
        \end{tabular}
    \caption{Relative error of prediction in $\Omega \times [0,3]$ with same initial conditions and external stimulus.}
    \label{table:2_relative_error}
    \end{table}

    We also tested the models with a new stimulus, denoted as $I_s(z,t) = d(z_1, z_2) \cos(t)$.
    \Cref{fig:2_test_predict} displays the predicted state at the spatial coordinates $z = (2.8, 9.8)$ and predicted spatial distribution at $t=2$,
    illustrating our method's capability to handle previously unseen inputs.
    Quantitatively, \cref{table:2_relative_error_test} presents the relative errors, defined in~\eqref{equ:relative_error},
    in $\Omega \times [0,2]$ for both the transmembrane potential $v$ and the gating variable $s$ with different external stimuli.
    We can observe that the results are similar to the predictive experiment with the same external stimulus, emphasizing that our method can also exhibit better performance when faced with unseen stimuli.

    \begin{figure}[ht]
        \centering
        \includegraphics[width = \textwidth]{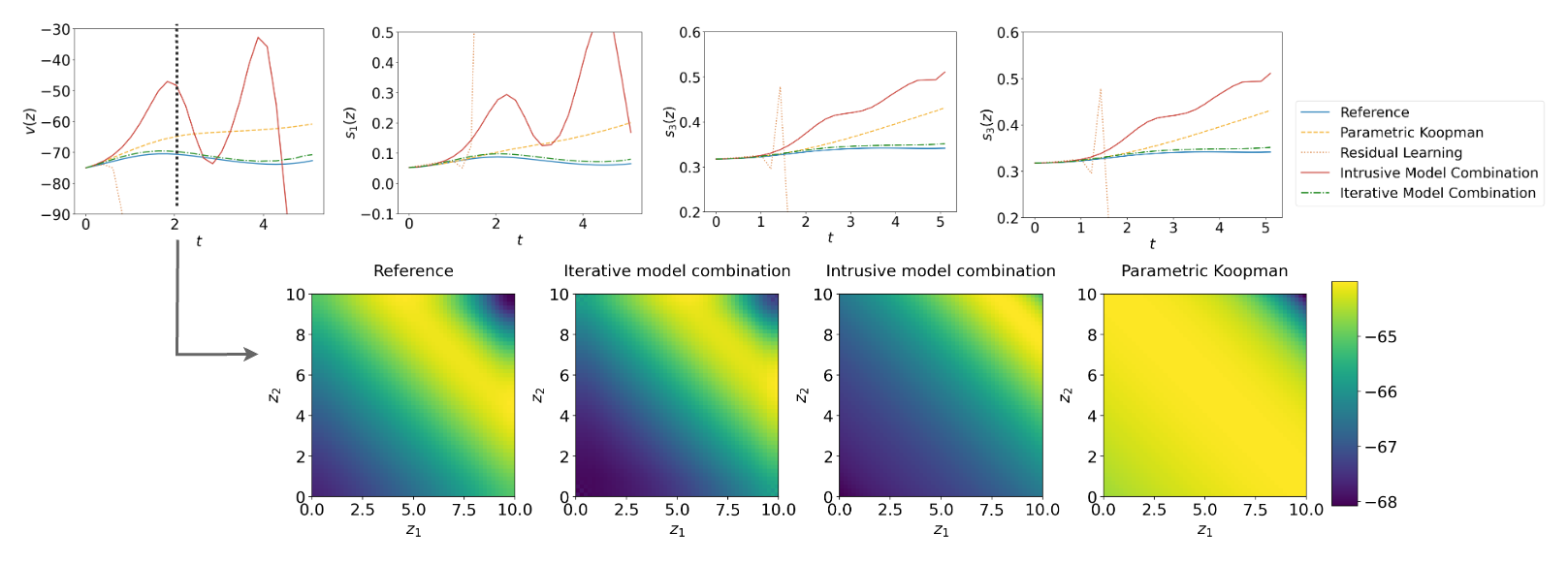}
        \caption{Predicted states with different external stimuli.
        Top: Predicted states at spatial coordinates $z = (2.8, 9.8)$.
        Bottom: Spatial distribution of the transmembrane potential $v$ at $t=2$. Left: the reference distribution; Middle: by hybrid-modelling; Right: by Parameterized Koopman.}
        \label{fig:2_test_predict}
    \end{figure}

\begin{table}[ht]
\centering
\begin{tabular}{lllll}
\hline\hline
                                    &$v$     & $s_1$                & $s_2$                & $s_3$                \\ \hline
            Linear regression          & 8.9919\%          & 5.7578\%         & 7.1977\%          & 6.0076\%          \\
            Parametric Koopman Model   & 5.0429\%          & 1.4269\%         & 4.9132\%          & 1.3653\%          \\
            Residual learning          & $\geq 100\%$        & $\geq 100\%$        & $\geq 100\%$        & $\geq 100\%$        \\
            Intrusive model combination & 17.7203\% & $\geq 100\%$ & 18.0863\% & 9.3989\% \\
            \Hm & \textbf{0.9558\%} & \textbf{7.8112\%} & \textbf{0.3870\%} & \textbf{0.8849\%} \\ \hline
            \end{tabular}
            \caption{Relative error of prediction in $\Omega \times [0,2]$ with different external stimuli.}
    \label{table:2_relative_error_test}
            \end{table}


\subsection{Model predictive control in parameterized systems}
\label{sec:example3}

\paragraph*{Problem}
In this section, we consider a parameterized discrete-time nonlinear controlled
dynamical system
\begin{equation}
    x_{n+1} = F(x_n, c_n, e_n),
\end{equation}
where \(x_n\in X\subset\mathbb{R}^K\) denotes the state of the system, \(c_n\in
\mathcal{C}\subset \mathbb{R}^{m_c}\) is the control input, \(e_n\in
\mathcal{E}\subset \mathbb{R}^{m_e}\) represents the external input, and \(F\) is the transition mapping. The
control input \(c_n\) remains subject to adjustments and monitoring by the
controller, whereas \(e_n\) is influenced by external factors and
cannot be controlled.

An example for such control problem is the dynamics of a
robotic fish studied in~\cite{mamakoukas2019local}.
The states of the robotic fish are $s = [x, y, \phi, v_x, v_y, \omega]^T$, where $x, y$ are the world-frame coordinates, $\phi$ is the orientation, $v_x$ and $v_y$ are the body-frame linear velocities (surge and sway, respectively), and $\omega$ is the body-frame angular velocity.
We use $\alpha$ to indicate the angle of the tail. The tail is actuated with $\alpha(t) = \alpha_0 + \alpha_a \sin(\omega_a t)$, where $\alpha_a, \alpha_0, \omega_a$ are the amplitude, bias, and frequency of the tail beat.
We use the bias $\alpha_0$ and amplitude $\alpha_a$ as control inputs, while the frequency $\omega_a$, which may be determined by battery and mechanical constraints, will be used as an external input in the simulation.
The dynamical system is defined as an average model for tail-accurate systems~\cite{wang2015averaging}:
\begin{equation}
\label{equ:robotics}
\dot{s}=\left[\begin{array}{c}
\dot{x} \\
\dot{z} \\
\dot{\psi} \\
\dot{v}_x \\
\dot{v}_y \\
\dot{\omega}
\end{array}\right]=F(s) \triangleq\left[\begin{array}{c}
v_x \cos (\psi)-v_y \sin (\psi) \\
v_x \sin (\psi)+v_y \cos (\psi) \\
\omega \\
f_1(s)+K_f f_4\left(\alpha_0, \alpha_a, \omega_a\right) \\
f_2(s)+K_f f_5\left(\alpha_0, \alpha_a, \omega_a\right) \\
f_3(s)+K_m f_6\left(\alpha_0, \alpha_a, \omega_a\right)
\end{array}\right]
\end{equation}
with detailed information of parameters and functions in~\cite{mamakoukas2019local}.

We focus on the tracking problem with~\eqref{equ:robotics}.
The problem is formulated as follows: given the initial value $s_0$ and external inputs $\{w_i\}$,
we would like to obtain a sequence of controlled inputs $\{c_i\}$ minimizing
$\sum_{i=1}^n ((s_i-s^{\mathrm{ref}}_i)^T Q (s_i-s^{\mathrm{ref}}_i)+ c_i^T R c_i)$
where $s_{i+1} = f(s_i, c_i, e_i)$ and $Q, R$ are semi-positive matrices.
We solve this tracking problem by Model Predictive Control (MPC) where the current control is obtained by solving a finite horizon open-loop optimal control problem at each sampling instant.
The MPC formulation solves the following optimization problem:
\begin{equation}
    \begin{aligned}
        \min_{c_0, c_1, \ldots, c_{h-1}} J(z, c, e)\\
        \text{s.t.} \quad z_i = \hat{f}(z_{i-1}, c_{i-1}, e_{i-1})\\
        c_{i-1} \in [a_{i-1}, b_{i-1}]
    \end{aligned}
    \label{equ:4_mpc}
\end{equation}
where $J(z,c) = \sum_{i=1}^h(z_i-z^{ref}_i)^T Q (z_i-z^{ref}_i) + \sum_{i=1}^h{c_i}^T R c_i$. The MPC solver is outlined in \cref{alg:mpc}.
For more information on the MPC method, the reader may consult
the review~\cite{mayne2000constrained}.
\begin{algorithm}
    \caption{MPC solver in parameterized dynamical system}
    \label{alg:mpc}
    \begin{algorithmic}
        \Require Initial value \(x_0\), dictionary \(\Psi\), horizon length \(h\), trajectory length \(N\), reference trajectory \(\mathbf{x}^{\mathrm{ref}}\), external inputs \(\mathbf{\hat{c}}\)
        \State \(z_0\gets \Psi(x_0)\)
        \State $\mathbf{z}^{\mathrm{ref}} = \Psi(\mathbf{x}^{\mathrm{ref}})$
        \For{\(i = 1\) to \(N\)}
            \State Solve the open-loop optimal control problem from \(i\) to \(i+h\) steps
            \State \(c_{i} \gets \text{OptimalControlSolver}(z_{i-1}, i, h, \mathbf{z}^{\mathrm{ref}}[i,i+h])\) \Comment{Solve optimization problem}
            \State Propagate state: \(x_i \gets F(x_{i-1}, c_{i}, e_{i})\) \Comment{Update state using dynamics}
            \State \(z_i\gets \Psi(x_i)\)
        \EndFor
    \end{algorithmic}
\end{algorithm}

The fundamental challenge in an MPC solver lies in selecting the appropriate transition function of observables, denoted as \(z_{i+1} = \hat{f}(z_i)\) in \cref{alg:mpc}.
Unlike the previous two experiments, in this section we compare the models with different structures for solving the control problem, highlighting the fact that our method of model combination can provide more accurate solutions when dealing with the control problem and guarantee the convexity of the optimization problem.

\paragraph*{Methods} Some data-driven methods are proposed for the linearization
of the MPC problem. Two representative structures are a linear
structure~\cite{korda2018linear} as $z_{n+1} = Kz_n + Bc_n$ and nonlinear
structure~\cite{williams2015data, guo2023learning} as $z_{n+1} = K(c_n)z_n$. Given that the
control is on \(c\), our model combination strategy
can ensure that the dynamics is linear in \(c\)
while a nonlinear structure is used to handle the external
input \(e\). We express this as follows:
\begin{equation}
    z_{n+1} = K(e_{n}) z_n + B c_n.
\end{equation}
Another approach to merging these two structures is represented by:
\begin{equation}
    z_{n+1} = Kz_n + B(e_{n}) c_{n},
\end{equation}
as $c_n$ could also be treated as part of the dictionary.

Thus, we compare the performance of the following four structures,
where only the hybrid ones are trained using the proposed method:
\begin{itemize}
    \item Linear structure, similar to~\cite{korda2018linear}: \(z_{n+1} = K_1z_n + B_1c_n + C_1e_n\),
    \item Hybrid structure (our methods, represented as hybrid structure 1 and hybrid structure 2 respectively):
    \begin{itemize}
        \item \(z_{n+1} = K_2(e_n)z_n + B_2c_n\),
        \item \(z_{n+1} = K_3z_n + B_3(e_{n}) c_{n}\),
    \end{itemize}
    \item Nonlinear structure, similar to~\cite{williams2015data, guo2023learning}: \(z_{n+1} = K_4(e_n,c_n)z_n\).
\end{itemize}
The optimization problem in~\eqref{equ:4_mpc} with linear and hybrid structures
is convex, with a guarantee in obtaining an optimal solution, whereas with
nonlinear structure it is non-convex. We parametrize the observables using a
fully-connected neural network denoted as \(NN_{\Psi}(s)\), and establish the
dictionary as \(\Psi(s) = [1, s, NN_{\Psi}(s)]\).
The dimension of the
dictionary \(\Psi(s)\) is randomly picked as 25 in this experiment.

Throughout our experiments, we follow a two-step process: initially, we perform
pre-training to learn the dictionary and subsequently fix it; then, we proceed
to train the operators.  The traditional Koopman operator (\(K_1, B_1, C_1, B_2,
K_3\)) is computed via least squares, while the parameterized Koopman operator
(\(K_2(e), B_3(e), K_4(c,e)\)) is constructed using a fully connected neural
network.  In the case of the hybrid structure, we use the Adam optimizer to
train the parameterized operator network for 5 epochs and perform a least
squares update on the traditional Koopman operator by \cref{alg:framework}. This
process is repeated for a total of 200 iterations. On the other hand, for the
nonlinear structure, we train the operator network for 1000 epochs.

To generate data, we sample $P = 1000$ initial conditions for the states with uniform distributions given by \cref{table:4_initial}.
For each individual sample, we apply random inputs generated from uniform distributions: $U_{\alpha_0}\left(-50^{\circ}, 50^{\circ}\right)$ for the tail angle bias, $U_{\alpha_a}\left(0,30^{\circ}\right)$ for the tail angle amplitude of oscillations, and $U_{\omega_a}\left(0,10\pi\ \mathrm{rad/s}\right)$ for the frequency.
Then, for each specific combination of initial conditions $s_k$, controlled inputs $u_k$, and external inputs $w_k$, we employ the dynamics described in~\eqref{equ:robotics}, we generate data for a total of 50 discrete time steps with a time step $\Delta t=1\mathrm{s}$.
Additionally, for the tracking problem, we construct the reference trajectory using constant inputs: $\alpha_0 = \frac{1}{6}\pi$, $\alpha_a = \frac{1}{18}\pi$, and $\omega_a = 2\pi$.
Subsequently, we leverage the linear, hybrid, and nonlinear structures to track the trajectory under distinct external input conditions for $\omega_a$, which are set to $2\pi$, $2\pi + \pi \sin(\frac{\pi}{60}t)$, and random samples uniformly selected in $[0, 10\pi]$, respectively.
In the MPC problem, we choose the horizon length and cost function as follows: $h = 5$, $Q = \mathrm{diag}\{0,0,0,0,10^8,0.01,10^8,0,\ldots,0,\ldots\}$, $R = \mathrm{diag}\{0.1,0.1\}$ and the total length of trajectory is 180. The optimization problems are solved by the L-BFGS-B method.

\begin{table}[ht]
    \centering
    \begin{tabular}{ll}
    \hline
    \hline
   Parameter & Distribution \\ \hline
    $x$  & $\mathcal{U}(-0.5, 0.5)$\\
    $y$          &$\mathcal{U}(-0.1, 0.1)$\\
    $\phi$          &$\mathcal{U}(-\pi/4, \pi/4)$\\
    $v_x$          &$\mathcal{U}(0, 0.04)$\\
   $v_y$          & $\mathcal{U}(-0.0025, 0.0025)$ \\
   $\omega$          &$\mathcal{U}(-0.5, 0.5)$\\ \hline
\end{tabular}
\caption{Distribution of initial values.}
\label{table:4_initial}
\end{table}

\paragraph{Results}
The results of the tracking experiment are visually depicted in
\cref{fig:4_track}.
In the tracking problem described in the previous paragraph,
we are primarily concerned with only two dimensions, namely $v_x$ and $\omega$ (the 5th and 7th dimensions of $\Psi(s)$),
as they play a crucial role in maintaining the shape of the robotic fish's trajectory.
The figure on the left displays the mean tracking error of $v_x$ and
$\omega$, which is defined as:

\begin{equation}
\text{mean tracking error} = \frac{1}{2} \left( \frac{\max_i|v_x(t_i) - v_x^{\text{ref}}(t_i)|}{\max_i|v_x^{\text{ref}}(t_i)|} + \frac{\max_i|\omega(t_i) - \omega^{\text{ref}}(t_i)|}{\max_i|\omega^{\text{ref}}(t_i)|} \right).
\end{equation}
Additionally, the computation time of the MPC problem is shown on the right. The linear structure has the benefit of simplicity and the fastest solving time. However, when the external inputs $\omega_a$ change rapidly (e.g., when $\omega_a$ is chosen randomly), the predicted trajectory exhibits significant oscillations. The nonlinear structure is sometimes susceptible to optimization errors and exhibits poor accuracy even in simple problem settings. In comparison to these two, our methods demonstrate good performance with a modest time cost.

\begin{figure}[ht]
    \centering
    \includegraphics[width = 0.95\textwidth]{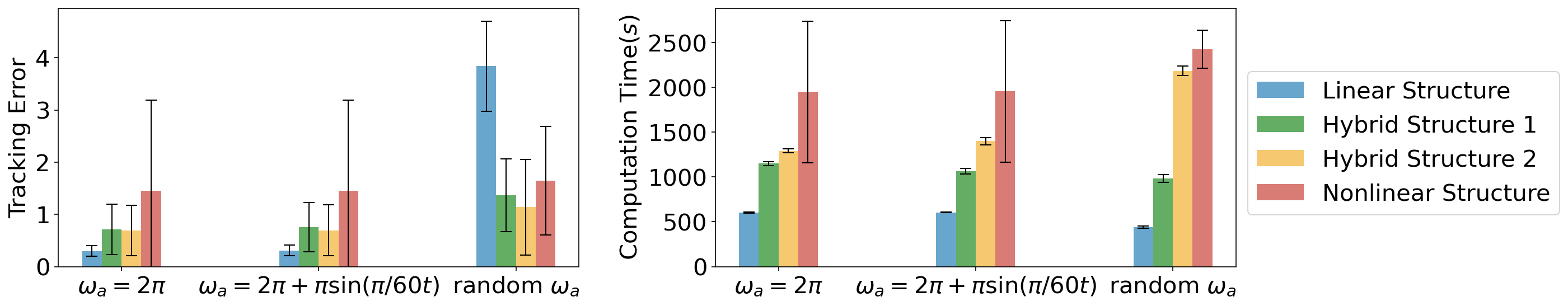}
    \caption{Tracking problem for different external inputs $\omega_a$. Left: mean tracking error of $v_x$ and $\omega$. Right: computation time of the MPC problem.}
    \label{fig:4_track}
\end{figure}

\subsection{Discussion}
In this section, we discuss the advantages and limitations of the proposed methodology as presented in the experiments.
The first advantage is that our method can identify the optimal solution in the direct-sum space $\mathcal{G} \oplus \mathcal{H}$ of hypothesis spaces $\mathcal{G}$ and $\mathcal{H}$. 
When $\mathcal{G}$ and $\mathcal{H}$ satisfy \cref{ass:1} and both training methods for finding the optimal solution in the individual hypothesis spaces $\mathcal{G}$ and $\mathcal{H}$ are able to identify the precise optimal solution, then the optimal solution obtained by our method and its acceleration scheme is also precise. 
This is demonstrated in the experiment described in \cref{sec:example1_koopman}. 
Additionally, even when the hypothesis spaces $\mathcal{G}$ and $\mathcal{H}$ are not linear subspaces, the results presented in \cref{sec:example1_NN} and \cref{sec:example2} demonstrate that the proposed method remains effective.

Beyond optimal solution identification, results presented in \cref{sec:example2} demonstrate that our method offers additional benefits when combined with neural network-based methods, as compared to intrusive methods.
That is because our method enables careful handling of scaling in the training of neural networks.
For illustration, let us assume that $\mathcal{H}$ is a neural network hypothesis.
When learning $F = F_\mathcal{G} + F_\mathcal{H}$, if $F_\mathcal{G}(x)$ present a much larger scale then $F_\mathcal{H}(x)$ in the training dataset, $\hat{F}_\mathcal{H} = 0$ could be a plausible solution when training together.
However, our method could present scaling information, allowing for rescaling the residual $F-\hat{F}_\mathcal{G}$ each time $\hat{F}_\mathcal{H}$ is trained.
This rescaling process can significantly enhance the performance of the optimization process.

The main limitation of our method is that to ensure the non-intrusiveness of the training process, the two components of the combined model must be separable. 
For instance, in a multi-step simulation process such as $x_1 = F_\mathcal{G}(x_0) + F_\mathcal{H}(x_0)$ followed by $x_2 = F_\mathcal{G}(x_1) + F_\mathcal{H}(x_1)$, when we optimize $F_\mathcal{G}$ and $F_\mathcal{H}$ separately using the data tuples $\{x_0, x_2\}$, the two components of the combined model are not separable.
Consequently, the non-intrusiveness of the training process cannot be guaranteed under these circumstances.
Future work could aim to extend the method, both theoretically and algorithmically, to address problems involving non-separable models.
This might involve approaches related to operator splitting~\cite{macnamara2016operator}.

\section{Conclusion}
\label{sec:conclusion}
Model combination holds significant potential in modelling
high-dimensional and complex dynamical systems. In this paper, we introduced
a linearly convergent \hm, an approach that efficiently combines two
model spaces through addition in a non-intrusive way. We provided both
theoretical and empirical evidence to support its convergence patterns.
Furthermore, our numerical experiments confirm the effectiveness of our approach in
addressing both prediction and control problems within complex systems. In future
studies, it would be interesting to explore extending our method to incorporate
unsupervised learning for model selection and acceleration strategies, and to apply the methodology on engineering cases of interest.

\section*{Acknowledgments}
This research is part of the programme DesCartes and is supported by the National Research Foundation, Prime Minister’s Office, Singapore under its Campus for Research Excellence and Technological Enterprise (CREATE) programme.
\bibliography{ref}

\begin{thebibliography}{10}
\expandafter\ifx\csname url\endcsname\relax
  \def\url#1{\texttt{#1}}\fi
\expandafter\ifx\csname urlprefix\endcsname\relax\def\urlprefix{URL }\fi
\expandafter\ifx\csname href\endcsname\relax
  \def\href#1#2{#2} \def\path#1{#1}\fi

\bibitem{wang2023coupled}
A.~Wang, P.~Qin, X.-M. Sun, Coupled physics-informed neural networks for inferring solutions of partial differential equations with unknown source terms, arXiv preprint arXiv:2301.08618.

\bibitem{reinhart2017hybrid}
R.~F. Reinhart, Z.~Shareef, J.~J. Steil, Hybrid analytical and data-driven modeling for feed-forward robot control, Sensors 17~(2) (2017) 311.

\bibitem{golemo2018sim}
F.~Golemo, A.~A. Taiga, A.~Courville, P.-Y. Oudeyer, Sim-to-real transfer with neural-augmented robot simulation, in: Conference on Robot Learning, PMLR, 2018, pp. 817--828.

\bibitem{mehta2021neural}
V.~Mehta, I.~Char, W.~Neiswanger, Y.~Chung, A.~Nelson, M.~Boyer, E.~Kolemen, J.~Schneider, Neural dynamical systems: Balancing structure and flexibility in physical prediction, in: 2021 60th IEEE Conference on Decision and Control (CDC), IEEE, 2021, pp. 3735--3742.

\bibitem{mitusch2021hybrid}
S.~K. Mitusch, S.~W. Funke, M.~Kuchta, Hybrid fem-nn models: Combining artificial neural networks with the finite element method, Journal of Computational Physics 446 (2021) 110651.

\bibitem{wackers2020multi}
J.~Wackers, M.~Visonneau, A.~Serani, R.~Pellegrini, R.~Broglia, M.~Diez, Multi-fidelity machine learning from adaptive-and multi-grid rans simulations, in: 33rd Symposium on Naval Hydrodynamics, 2020.

\bibitem{linot2023stabilized}
A.~J. Linot, J.~W. Burby, Q.~Tang, P.~Balaprakash, M.~D. Graham, R.~Maulik, Stabilized neural ordinary differential equations for long-time forecasting of dynamical systems, Journal of Computational Physics 474 (2023) 111838.

\bibitem{willard2020integrating}
J.~Willard, X.~Jia, S.~Xu, M.~Steinbach, V.~Kumar, Integrating physics-based modeling with machine learning: A survey, arXiv preprint arXiv:2003.04919 1~(1) (2020) 1--34.

\bibitem{von2020combining}
L.~von Rueden, S.~Mayer, R.~Sifa, C.~Bauckhage, J.~Garcke, Combining machine learning and simulation to a hybrid modelling approach: Current and future directions, in: Advances in Intelligent Data Analysis XVIII: 18th International Symposium on Intelligent Data Analysis, IDA 2020, Konstanz, Germany, April 27--29, 2020, Proceedings 18, Springer, 2020, pp. 548--560.

\bibitem{von_Rueden_2021}
L.~von Rueden, S.~Mayer, K.~Beckh, B.~Georgiev, S.~Giesselbach, R.~Heese, B.~Kirsch, M.~Walczak, J.~Pfrommer, A.~Pick, R.~Ramamurthy, J.~Garcke, C.~Bauckhage, J.~Schuecker, Informed machine learning - a taxonomy and survey of integrating prior knowledge into learning systems, {IEEE} Transactions on Knowledge and Data Engineering (2021) 1--1\href {http://dx.doi.org/10.1109/tkde.2021.3079836} {\path{doi:10.1109/tkde.2021.3079836}}.

\bibitem{karniadakis_physics-informed_2021}
G.~E. Karniadakis, I.~G. Kevrekidis, L.~Lu, P.~Perdikaris, S.~Wang, L.~Yang, Physics-informed machine learning, Nature Reviews Physics 3~(6) (2021) 422--440.
\newblock \href {http://dx.doi.org/10.1038/s42254-021-00314-5} {\path{doi:10.1038/s42254-021-00314-5}}.

\bibitem{beckh2021explainable}
K.~Beckh, S.~M{\"u}ller, M.~Jakobs, V.~Toborek, H.~Tan, R.~Fischer, P.~Welke, S.~Houben, L.~von Rueden, Explainable machine learning with prior knowledge: An overview, arXiv preprint arXiv:2105.10172.

\bibitem{brunton2021modern}
S.~L. Brunton, M.~Budi{\v{s}}i{\'c}, E.~Kaiser, J.~N. Kutz, Modern koopman theory for dynamical systems, arXiv preprint arXiv:2102.12086.

\bibitem{tu2013dynamic}
J.~H. Tu, Dynamic mode decomposition: Theory and applications, Ph.D. thesis, Princeton University (2013).

\bibitem{williams2015data}
M.~O. Williams, I.~G. Kevrekidis, C.~W. Rowley, A data--driven approximation of the koopman operator: Extending dynamic mode decomposition, Journal of Nonlinear Science 25 (2015) 1307--1346.

\bibitem{raissi_physics-informed_2019}
M.~Raissi, P.~Perdikaris, G.~E. Karniadakis, Physics-informed neural networks: {A} deep learning framework for solving forward and inverse problems involving nonlinear partial differential equations, Journal of Computational Physics 378 (2019) 686--707.
\newblock \href {http://dx.doi.org/https://doi.org/10.1016/j.jcp.2018.10.045} {\path{doi:https://doi.org/10.1016/j.jcp.2018.10.045}}.

\bibitem{bai2021non}
Z.~Bai, L.~Peng, Non-intrusive nonlinear model reduction via machine learning approximations to low-dimensional operators, Advanced Modeling and Simulation in Engineering Sciences 8~(1) (2021) 28.

\bibitem{hesthaven2018non}
J.~S. Hesthaven, S.~Ubbiali, Non-intrusive reduced order modeling of nonlinear problems using neural networks, Journal of Computational Physics 363 (2018) 55--78.

\bibitem{saad2003iterative}
Y.~Saad, Iterative methods for sparse linear systems, SIAM, 2003.

\bibitem{cai2019accelerated}
H.~Cai, J.-F. Cai, K.~Wei, Accelerated alternating projections for robust principal component analysis, The Journal of Machine Learning Research 20~(1) (2019) 685--717.

\bibitem{li2017extended}
Q.~Li, F.~Dietrich, E.~M. Bollt, I.~G. Kevrekidis, Extended dynamic mode decomposition with dictionary learning: A data-driven adaptive spectral decomposition of the koopman operator, Chaos: An Interdisciplinary Journal of Nonlinear Science 27~(10) (2017) 103111.

\bibitem{guo2023learning}
Y.~Guo, M.~Korda, I.~G. Kevrekidis, Q.~Li, Learning parametric koopman decompositions for prediction and control, arXiv preprint arXiv:2310.01124.

\bibitem{korda2018linear}
M.~Korda, I.~Mezi{\'c}, Linear predictors for nonlinear dynamical systems: Koopman operator meets model predictive control, Automatica 93 (2018) 149--160.

\bibitem{shiqi2023.Nonintrusivemodelcombinationforlearningdynamics}
W.~Shiqi, \href{https://github.com/Shiqi-Wu/Nonintrusive-model-combination-for-learning-dynamics}{Nonintrusive-model-combination-for-learning-dynamics} (Oct. 2023).
\newline\urlprefix\url{https://github.com/Shiqi-Wu/Nonintrusive-model-combination-for-learning-dynamics}

\bibitem{metryar2012foundation}
M.~Mohri, A.~Rostamizadeh, A.~Talwalkar, The Foundations of Machine Learning, 2012.

\bibitem{anwarali2022dynamic}
H.~P. Anwar~Ali, Z.~Zhao, Y.~J. Tan, W.~Yao, Q.~Li, B.~C.~K. Tee, Dynamic {{Modeling}} of {{Intrinsic Self-Healing Polymers Using Deep Learning}}, ACS Applied Materials \& Interfaces 14~(46) (2022) 52486--52498.
\newblock \href {http://dx.doi.org/10.1021/acsami.2c14543} {\path{doi:10.1021/acsami.2c14543}}.

\bibitem{vonNeumann1949}
J.~V. Neumann, On rings of operators. reduction theory, Annals of Mathematics 50~(2) (1949) 401--485.

\bibitem{dykstra_algorithm_1983}
R.~L. Dykstra, An {Algorithm} for {Restricted} {Least} {Squares} {Regression}, Journal of the American Statistical Association 78~(384) (1983) 837--842.
\newblock \href {http://dx.doi.org/10.1080/01621459.1983.10477029} {\path{doi:10.1080/01621459.1983.10477029}}.

\bibitem{collatz2006approximation}
L.~Collatz, Approximation by functions of fewer variables, in: Conference on the Theory of Ordinary and Partial Differential Equations: Held in Dundee/Scotland, March 28--31, 1972, Springer, 2006, pp. 16--31.

\bibitem{jordan1875essai}
C.~Jordan, Essai sur la g{\'e}om{\'e}trie {\`a} $ n $ dimensions, Bulletin de la Soci{\'e}t{\'e} math{\'e}matique de France 3 (1875) 103--174.

\bibitem{bauschke2003accelerating}
H.~Bauschke, F.~Deutsch, H.~Hundal, S.-H. Park, Accelerating the convergence of the method of alternating projections, Transactions of the American Mathematical Society 355~(9) (2003) 3433--3461.

\bibitem{NIPS2017_3a835d32}
N.~Takeishi, Y.~Kawahara, T.~Yairi, Learning koopman invariant subspaces for dynamic mode decomposition, in: I.~Guyon, U.~V. Luxburg, S.~Bengio, H.~Wallach, R.~Fergus, S.~Vishwanathan, R.~Garnett (Eds.), Advances in Neural Information Processing Systems, Vol.~30, Curran Associates, Inc., 2017.

\bibitem{lusch2018deep}
B.~Lusch, J.~N. Kutz, S.~L. Brunton, Deep learning for universal linear embeddings of nonlinear dynamics, Nature communications 9~(1) (2018) 4950.

\bibitem{sundnes2007computing}
J.~Sundnes, G.~T. Lines, X.~Cai, B.~F. Nielsen, K.-A. Mardal, A.~Tveito, Computing the electrical activity in the heart, Vol.~1, Springer Science \& Business Media, 2007.

\bibitem{automatedPDEODE}
P.~E. Farrell, J.~E. Hake, S.~W. Funke, M.~E. Rognes, Automated adjoints of coupled pde-ode systems, SIAM Journal on Scientific Computing 41~(3) (2019) C219--C244.
\newblock \href {http://dx.doi.org/10.1137/17M1144532} {\path{doi:10.1137/17M1144532}}.

\bibitem{he2016deep}
K.~He, X.~Zhang, S.~Ren, J.~Sun, Deep residual learning for image recognition, in: Proceedings of the IEEE conference on computer vision and pattern recognition, 2016, pp. 770--778.

\bibitem{mamakoukas2019local}
G.~Mamakoukas, M.~Castano, X.~Tan, T.~Murphey, Local koopman operators for data-driven control of robotic systems, in: Robotics: science and systems, 2019.

\bibitem{wang2015averaging}
J.~Wang, X.~Tan, Averaging tail-actuated robotic fish dynamics through force and moment scaling, IEEE Transactions on Robotics 31~(4) (2015) 906--917.

\bibitem{mayne2000constrained}
D.~Q. Mayne, J.~B. Rawlings, C.~V. Rao, P.~O. Scokaert, Constrained model predictive control: Stability and optimality, Automatica 36~(6) (2000) 789--814.

\bibitem{macnamara2016operator}
S.~MacNamara, G.~Strang, Operator splitting, Splitting methods in communication, imaging, science, and engineering (2016) 95--114.

\bibitem{altprojmethods}
R.~Escalante, M.~Raydan, Alternating Projection Methods, Society for Industrial and Applied Mathematics, Philadelphia, PA, 2011.
\newblock \href {http://dx.doi.org/10.1137/9781611971941} {\path{doi:10.1137/9781611971941}}.

\end{thebibliography}

\newpage
\appendix
\section{Proof}
\subsection{Proof of \cref{thm:converge}}
\label{appendix1}
\begin{lemma}[Von Neumann~\cite{vonNeumann1949}] 
    \label{Lemma:von}
    Set $b_0 := x,a_n:=P_A(b_{n-1}), b_n:=P_B(a_n)$. We claim that both $\{a_1,a_2,a_3,...,a_n,...\}$ and $\{b_0,b_1,b_2,...,b_n,...\}$ will converge to $P_{A\cap B} (x)$ in norm whenever $A, B$ are closed subspaces.
    \end{lemma}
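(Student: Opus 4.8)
The plan is to collapse the alternating recursion into the iteration of a single self-adjoint operator, for which strong convergence of the powers is classical. Set $S := P_A P_B P_A$ and $S' := P_B P_A P_B$. From $a_n = P_A b_{n-1}$, $b_n = P_B a_n$ and $b_0 = x$ one immediately gets $b_n = (P_B P_A)^n x$ and $a_n = P_A (P_B P_A)^{n-1} x$. Using only $P_A^2 = P_A$ and $P_B^2 = P_B$, a one-line induction on $k$ gives the identity $(P_A P_B P_A)^k = (P_A P_B)^k P_A$, and the analogous one for $S'$; substituting these yields the clean closed forms $a_n = S^{\,n-1}(P_A x)$ and $b_n = (S')^{\,n-1}(P_B P_A x)$. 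So the whole problem reduces to understanding $S^n y$ and $(S')^n y$ as $n\to\infty$.

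Next I would record the elementary properties of $S$: it is bounded, $S^\ast = (P_A P_B P_A)^\ast = P_A P_B P_A = S$, and $0 \le S \le I$, since $\langle S y, y\rangle = \langle P_B P_A y, P_A y\rangle = \|P_B P_A y\|^2 \in [0, \|P_A y\|^2] \subseteq [0, \|y\|^2]$. For such an operator the spectral theorem gives $\sigma(S) \subseteq [0,1]$ and $S^n = \int_{[0,1]} \lambda^n \, dE(\lambda)$; because $\lambda^n \to \mathbf{1}_{\{1\}}(\lambda)$ pointwise on $[0,1]$ with $|\lambda^n|\le 1$, bounded convergence in the spectral measure gives $\|S^n y - E(\{1\})y\|^2 = \int_{[0,1]} (\lambda^n - \mathbf{1}_{\{1\}})^2 \, d\langle E(\lambda)y,y\rangle \to 0$, so $S^n \to E(\{1\})$ in the strong operator topology, where $E(\{1\})$ is the orthogonal projection onto $\operatorname{Fix}(S) = \ker(I-S)$. (If one wants to avoid invoking the spectral theorem, one can instead note that $\langle S^n y, y\rangle$ is nonincreasing and use the identity $\|S^n y - S^m y\|^2 = \langle S^{2n}y,y\rangle - 2\langle S^{n+m}y,y\rangle + \langle S^{2m}y,y\rangle$ to see $\{S^n y\}$ is Cauchy; its limit operator is self-adjoint, idempotent, hence an orthogonal projection, whose range is $\operatorname{Fix}(S)$.)

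It then remains to identify $\operatorname{Fix}(S)$ and match up the constants. If $y \in A\cap B$ then $P_A y = P_B y = y$, so $Sy = y$; conversely, if $Sy = y$ the chain $\|y\| = \|P_A P_B P_A y\| \le \|P_B P_A y\| \le \|P_A y\| \le \|y\|$ forces equality throughout, and $\|P_A y\| = \|y\|$ gives $y = P_A y \in A$ by Pythagoras, after which $\|P_B y\| = \|y\|$ gives $y = P_B y \in B$; hence $\operatorname{Fix}(S) = A\cap B$, and the same argument with the roles of $A$ and $B$ swapped gives $\operatorname{Fix}(S') = A\cap B$. Therefore $a_n = S^{\,n-1}(P_A x) \to P_{A\cap B}(P_A x) = P_{A\cap B} x$, using $A\cap B \subseteq A$ so $P_{A\cap B}P_A = P_{A\cap B}$, and likewise $b_n = (S')^{\,n-1}(P_B P_A x) \to P_{A\cap B}(P_B P_A x) = P_{A\cap B} x$; the term $b_0 = x$ is irrelevant to the limit. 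The main obstacle is the strong convergence $S^n \to E(\{1\})$: everything else is bookkeeping, but this step genuinely requires an analytic input (the spectral theorem, or the Cauchy estimate above), since in infinite dimensions the iteration need not be a strict contraction even when $A\cap B = \{0\}$, so a naive Banach-fixed-point argument is unavailable.
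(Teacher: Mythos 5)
Your proof is correct, and it takes a genuinely different route from the paper's. The paper reproduces von Neumann's original geometric argument: Pythagorean identities give the monotonicity $\|a_{k+1}\|\le\|b_k\|\le\|a_k\|$ and $\|a_k-b_k\|\to 0$, a limit point $\hat x$ is extracted and shown to lie in $A\cap B$, the same orthogonality identity shows $\|a_k-\hat x\|$ is monotone so the whole sequence converges, and finally $\hat x=P_{A\cap B}(x)$ is identified via the commutation $P_AP_{A\cap B}=P_{A\cap B}P_B$. You instead collapse the alternation into powers of the positive self-adjoint contraction $S=P_AP_BP_A$, invoke strong convergence $S^n\to E(\{1\})$ from the spectral theorem (or the Riesz--Nagy Cauchy estimate via monotonicity of $\langle S^ny,y\rangle$), and identify $\operatorname{Fix}(S)=A\cap B$ by the norm-equality chain. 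Your route costs more machinery but is airtight in infinite dimensions, and it correctly isolates the one genuinely analytic step; notably, the paper's extraction of a norm-convergent subsequence from the merely bounded sequence $\{a_k\}$ is not justified in an infinite-dimensional Hilbert space (only weak sequential compactness is available there), so the paper's write-up has a gap at exactly the point your argument handles rigorously. Your reduction also delivers the rate statement used later in the paper essentially for free, since $\|(Q_AQ_B)^n\|=\|Q_AQ_B\|^{2n-1}$ is a statement about powers of the same operator $S$ restricted to $(A\cap B)^{\perp}$.
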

    \begin{proof}
    For $b_k = P_B(a_k)$, we have $(a_k-b_k)^T\cdot b_k=0.$ Then
    \begin{equation}
    \begin{split}
        \|a_k\|^2 = \|a_k-b_k+b_k\|^2\\
        =\|a_k-b_k\|^2+\|b_k\|^2+2(a_k-b_k)^T\cdot(b_k)\\
        =\|a_k-b_k\|^2+\|b_k\|^2.
    \end{split}
    \label{equ:akbk}
    \end{equation}
    Then $\|b_k\|^2\leq\|a_k\|^2.$\\
    Similarly, we could prove $\|a_{k+1}\|\leq \|b_k\|\leq \|a_{k}\|\leq \|b_{k-1}\|\leq ...\leq\|x\|.$
    Both $\{\|b_k\|\}$ and $\{\|a_k\|\}$ are non-increasing sequences, and converge to $B, A$. \\
    For a bounded sequence $\{a_k\}$, there exists a subsequence $\{a_{k_i}\}$ such that 
    \begin{equation}
        a_{k_i}\to \hat{x}\quad \mathrm{as}\ k_i\to +\infty.
    \end{equation}
    From~\eqref{equ:akbk}, we have $\|a_k-b_k\|\to 0$ as $k\to +\infty,$ then
    \begin{equation}
        \mathrm{dist}(a_{k_i},B)\leq \mathrm{dist}(a_{k_i},b_{k_i}) \to 0\quad \mathrm{as}\ k_i\to +\infty.
    \end{equation}
    Thus, $\hat{x}\in B$.
    Since $A$ is closed, $\hat{x}\in A\cap B.$\\
    For $b_k = P_B(a_k)$, we have $(a_k-b_k)^T\cdot (b_k-\hat{x})= 0.$ Then 
    \begin{equation}
        \begin{split}
        \|a_k-\hat{x}\|^2 = \|a_k-b_k+b_k-\hat{x}\|^2\\
        =\|a_k-b_k\|^2+\|b_k-\hat{x}\|^2+2(a_k-b_k)^T\cdot(b_k-\hat{x})\\
        =\|a_k-b_k\|^2+\|b_k-\hat{x}\|^2.
    \end{split}
    \end{equation}
    Then $\|a_k-\hat{x}\|\to 0$, $\|b_k-\hat{x}\|\to 0$ as $k\to +\infty.$\\
    We only need to illustrate $\hat{x}=P_{A\cap B}(x).$\\
    Separate $x$ into two terms
    \begin{equation}
        x = P_{A\cap B}(x) + (I-P_{A\cap B})(x),
    \end{equation}
    we observe that $P_A P_{A\cap B}=P_{A\cap B}P_B$, then when repeating the projection to $A$ and $B$, 
    \begin{equation}
        a_k = P_{A\cap B}(x) + (I-P_{A\cap B})(a_k)\to P_{A\cap B}(x)\quad \mathrm{as}\ k \to +\infty,
    \end{equation}
    that is, $\hat{x} = P_{A\cap B}(x).$
    \end{proof}

    \paragraph{Proof of Theorem~\ref{thm:converge}}
    \begin{proof}
        \begin{enumerate}[label = (\arabic*)]
            \item Let $n =1$,
            \begin{equation}
                \begin{split}
                    [(I-P_{\mathcal{G}})\circ (I-P_{\mathcal{H}})](F)& = (I-P_{\mathcal{G}}-P_{\mathcal{H}}+P_{\mathcal{G}}P_{\mathcal{H}})(F)\\
                    & = F - P_{\mathcal{G}}(I-P_{\mathcal{H}})(F) - P_{\mathcal{H}}(F)\\
                    & = F - P_{\mathcal{G}}(I-P_{\mathcal{H}})(F)+F_H^1\\
                    & = F - P_{\mathcal{G}}(F-F_H^1)-F_H^1\\
                    & = F-F_G^1 - F_H^1.
                \end{split}
            \end{equation}
            Suppose that when $n = k$, $[(I-P_{\mathcal{G}})\circ(I-P_{\mathcal{H}})]^k(F) = F-F_\mathcal{G}^k - F_\mathcal{H}^k$, then we have 
            \begin{equation}
                \begin{split}
                    [(I-P_{\mathcal{G}})\circ (I-P_{\mathcal{H}})](F-F_G^k - F_H^k)\\ 
                    = F-F_G^k-F_H^k - P_{\mathcal{G}}(I-P_{\mathcal{H}})(F-F_G^k - F_H^k) - P_{\mathcal{H}}(F-F_G^k)+F_H^k\\
                    = F-F_G^k-F_H^k - P_{\mathcal{G}}(F-F_G^k - F_H^{k+1}) - F_H^{k+1} + F_H^k\\
                    = F-F_G^{k+1} - F_H^{k+1}
                \end{split}
            \end{equation}
            Let $A = \mathcal{G}^{\bot}, B = \mathcal{H}^{\bot}, b_0 = F$ in Lemma~\ref{Lemma:von}, 
            \begin{equation}
                \begin{split}
                    \lim_{n\to +\infty}\|P_{\mathcal{G}\oplus \mathcal{H}}(F) - F_G^n - F_H^n\|\\=\lim_{n\to +\infty}\|(I - P_{\mathcal{G}\oplus \mathcal{H}})(F) - [(I-P_{\mathcal{G}})\circ(I-P_{\mathcal{H}})]^n(F)\| = 0.
                \end{split}
            \end{equation}
            \item Let $\{\tilde{x}_k\}$ be the trajectory generated by $\tilde{x}_{k+1} = P_{\mathcal{G}\oplus \mathcal{H}}(\hat{x}_k)$ and $\tilde{F} = P_{\mathcal{G}\oplus \mathcal{H}}(F)$.
            $\forall k$,
            \begin{equation}
                \begin{split}
                \epsilon_k = \|\tilde{x}_k - x_k^n\|&\leq \|\tilde{F}(\tilde{x}_{k-1}) - F^n(\tilde{x}_{k-1})\|+\|F^n(\tilde{x}_{k-1})-F^n(x_{k-1}^n)\|\\
                &\leq \|\tilde{F}-F^n\|\|\tilde{x}_{k-1}\| + \|F^n\|\epsilon_{k-1}
                \end{split}
            \end{equation}
            Since $\epsilon_0=0$, $\epsilon_1 \leq\|\tilde{F}-F^n\|\|\tilde{x}_0\|\to 0$. Let $\|\tilde{x}_{k-1}\| = M_{k-1}$. Then, $\forall \epsilon>0, \exists N_1>0$, such that 
            \begin{equation}
                \forall n>N_1\quad \|\tilde{F}-F^n\|\leq \frac{\epsilon}{2M_{k-1}}.
            \end{equation}
            $\exists N_2>0$, such that 
            \begin{equation}
                \epsilon_{k-1}<\epsilon/2(\frac{\epsilon}{2M_{k-1}+1+ \epsilon_F}).
            \end{equation}
            Thus, set $N = \max\{N_1, N_2\}$, we have
            \begin{equation}
                \epsilon_k < \frac{\epsilon}{2} + \frac{\epsilon}{2} <\epsilon,
            \end{equation}
            the trajectory $\{x^n_k\}$ generated by $x^n_{k+1} = F^n(x^n_k)$ with $x^n_0 = x_0$  will converge to a sequence $\{\tilde{x}_k\}$.\\
            For the distance from $\{\tilde{x}_k\}$ to $\{x_k\}$, we have
            \begin{equation}
                \begin{split}
                e_k = \|\tilde{x}_k - x_k\| & =\|\tilde{F}(\tilde{x}_{k-1})-\tilde{F}(x_{k-1})\|+\|\tilde{F}(x_{k-1})- F(x_{k-1})\| \\
                &\leq (1+\epsilon_F)e_{k-1} + \epsilon_F M.
                \end{split}
            \end{equation}
            Since $e_0=0$, 
            \begin{equation}
                e_k\leq ((1+\epsilon_F)^k-1)M.
            \end{equation}
        Specifically, if $F\in \mathcal{H}\oplus\mathcal{G}$, $\epsilon_F$ = 0, then $e_k = 0$, $e^n_k = \epsilon^n_k\to 0$ as $n\to +\infty$.
\end{enumerate}
    \end{proof}

\subsection{Proof of \cref{thm:convrate}}
\label{appendix2}
\begin{lemma}{~\cite{altprojmethods}}
    \label{lemma:c0}
    \begin{equation}
        c_0(A,B)=\|P_AP_B\|
    \end{equation}
    \end{lemma}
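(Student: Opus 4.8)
The plan is to establish the two inequalities $c_0(A,B)\le\|P_AP_B\|$ and $\|P_AP_B\|\le c_0(A,B)$ separately, using only that $P_A$ and $P_B$ are orthogonal projections onto closed subspaces, i.e. self-adjoint idempotent operators that fix their own ranges.

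For the bound $c_0(A,B)\le\|P_AP_B\|$, I would take arbitrary $g\in A$, $h\in B$ with $\|g\|,\|h\|\le 1$, write $g=P_Ag$ and $h=P_Bh$, and use self-adjointness of $P_A$ to get $\langle g,h\rangle=\langle P_Ag,P_Bh\rangle=\langle g,P_AP_Bh\rangle$. Cauchy--Schwarz then gives $|\langle g,h\rangle|\le\|g\|\,\|P_AP_Bh\|\le\|P_AP_B\|$, and taking the supremum over admissible $g,h$ yields the claim.

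For the reverse bound, I would fix $x$ with $\|x\|\le 1$ and compute $\|P_AP_Bx\|^2=\langle P_AP_Bx,P_AP_Bx\rangle=\langle P_Bx,P_AP_Bx\rangle$ using idempotency and self-adjointness of $P_A$. Setting aside the degenerate cases, put $g=P_AP_Bx/\|P_AP_Bx\|\in A$ and $h=P_Bx/\|P_Bx\|\in B$, both unit vectors; then $\|P_AP_Bx\|^2=\|P_Bx\|\,\|P_AP_Bx\|\,\langle h,g\rangle$, so $\|P_AP_Bx\|=\|P_Bx\|\,\langle h,g\rangle\le\|P_Bx\|\,c_0(A,B)\le c_0(A,B)$ since $\|P_Bx\|\le\|x\|\le 1$. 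Taking the supremum over $\|x\|\le 1$ gives $\|P_AP_B\|\le c_0(A,B)$, and the two inequalities together prove the lemma.

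I do not expect any genuine obstacle here: the only delicate points are the degenerate cases $P_Bx=0$ or $P_AP_Bx=0$, which are trivial since then $\|P_AP_Bx\|=0\le c_0(A,B)$, and the observation that $P_AP_B=0$ forces $c_0(A,B)=0$, which follows at once from the identity $\langle g,h\rangle=\langle g,P_AP_Bh\rangle$. An alternative route would be to invoke $\|P_AP_B\|=\|(P_AP_B)^{*}\|=\|P_BP_A\|$ and argue symmetrically, but the direct estimate above is the most economical.
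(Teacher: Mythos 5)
Your proposal is correct and follows essentially the same route as the paper: both directions are obtained from self-adjointness and idempotency of the orthogonal projections together with the defining supremum of $c_0(A,B)$, and your upper bound $\|P_AP_Bx\|\le c_0(A,B)\|P_Bx\|\le c_0(A,B)\|x\|$ is exactly the paper's second step. The one place where you genuinely improve on the paper is the inequality $c_0(A,B)\le\|P_AP_B\|$: the paper first observes that for fixed $y\in B$ the ratio $|\langle g,y\rangle|/(\|g\|\,\|y\|)$ is extremized over $g\in A$ by $g=P_A(y)$ (it writes $\mathrm{argmin}$ where it means $\mathrm{argmax}$), and then asserts the existence of a $y_0\in B$ attaining the supremum defining $c_0(A,B)$ — an attainment claim that need not hold in an infinite-dimensional Hilbert space. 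Your argument sidesteps this entirely by bounding $|\langle g,h\rangle|=|\langle g,P_AP_Bh\rangle|\le\|P_AP_B\|$ for every admissible pair and only then passing to the supremum, so no extremizer is needed; it is the more robust version of the same computation.
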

    
    \begin{proof}
    for $y = P_B(F)$, 
    \begin{equation}
        |\langle P_A(y), y\rangle| = \|P_A(y)\|\|P_AP_B(y)\| = \|P_A P_B(y)\|^2\leq c_0(A,B)\|P_A(y)\|\|y\|,
    \end{equation}
    then 
    \begin{equation}
        \|P_AP_B(y)\|\leq c_0(A,B)\|y\|.
    \end{equation}
    
    For $y\in B$, we have that 
    \begin{equation}
        P_A(y) = \mathrm{argmin}_{F\in A}\frac{|\langle F,y\rangle|}{\|F\|\|y\|}.
    \end{equation}
    For $A,B$ some closed subspaces, there exists $y_0\in B$ such that 
    \begin{equation}
        c(A,B) = \frac{|\langle P_A(y_0),y_0\rangle|}{\|P_A(y_0)\|\|y_0\|}.
    \end{equation}
    Thus, $\|P_AP_B\|=\inf\{c:\|P_AP_B(F)\|\leq c\|F\|,\forall F\} \geq c(A,B).$
    
    For any $F$, 
    \begin{equation}
        \|P_AP_B(F)\|=\|P_AP_B(P_B(F))\|\leq c(A,B) \|P_B(F)\|\leq c(A,B)\|F\|.
    \end{equation}
    Thus, $\|P_AP_B\|\leq c(A,B).$
    
    Therefore we have $\|P_AP_B\|=c(A,B)$.
    \end{proof}
    
 \paragraph{Proof of theorem \ref{thm:convrate}}
    \begin{proof}
        \begin{enumerate}[label = (\arabic*)]
        \item Let $A = \mathcal{G}^{\bot}, B = \mathcal{H}^{\bot}, b_0 = F$ in~\cref{lemma:c0}. We notice that $(P_A P_B)^n-P_{(A\cap B)} = (P_A P_B)^n(I-P_{(A\cap B)}) = (P_A P_B)^n P_{(A\cap B)^{\bot}}$ and $P_{(A\cap B)^{\bot}}$ commutes with both $P_A$ and $P_B$, we have
        \begin{equation}
            (P_A P_B)^n P_{(A\cap B)^{\bot}}=(P_A P_{(A\cap B)^{\bot}}P_B P_{(A\cap B)^{\bot}})^n
        \end{equation}
        and 
        \begin{equation}
            P_AP_{(A\cap B)^{\bot}} = P_{A\cap(A\cap B)^{\bot}},
        \end{equation}
        \begin{equation}
            P_BP_{(A\cap B)^{\bot}} = P_{B\cap(A\cap B)^{\bot}}.
        \end{equation}
    We introduce the notation $Q_A = P_A P_{(A\cap B)^{\bot}}, Q_B = P_B P_{(A\cap B)^{\bot}}$, 
    then we have 
    \begin{equation}
        (P_AP_B)^n-P_{(A\cap B)} =(P_AP_B)^nP_{(A\cap B)^{\bot}} = (Q_AQ_B)^n.
    \end{equation}
    Now, since 
    \begin{equation}
        \begin{split}
            \|(Q_A Q_B)^n\|^2 
            & = \|(Q_A Q_B)^n[(Q_A Q_B)^n]^*\|\\
            & = \|(Q_A Q_B)^n(Q_B Q_A)^n\|\quad \textrm{(projection\ operator\ is\ self-adjoint)}\\
            & = \|(Q_A Q_B Q_A)^{2n-1}\|\\
            & = \|Q_A Q_B Q_A\|^{2n-1}\quad (Q_A Q_B Q_A\ \textrm{is\ self-adjoint})\\
            & =\|Q_A Q_B Q_B Q_A\|^{2n-1}\\
            & =(\|Q_A Q_B\|^{2n-1})^2.
        \end{split}
        \label{equ:QAQB_norm}
    \end{equation}
    Then $\|(Q_AQ_B)^n\|=\|Q_AQ_B\|^{2n-1}.$ From Lemma \ref{lemma:c0}, we have $\|Q_AQ_B\|=c_0(A\cap(A\cap B)^{\bot}, B\cap(A\cap B)^{\bot}) = c(A,B).$
    Since
    \begin{equation}
    \begin{split}
        \|F^n - \tilde{F}\| & = \|(P_AP_B)^n(F)-P_{A\cap B}(F)\| \leq \|(P_AP_B)^n-P_{A\cap B}\|\|F\|\\
        & \leq c(A,B)^{2n-1}\|F\| = c(\mathcal{G}, \mathcal{H})^{2n-1}\|F\|.
    \end{split}
    \label{equ:A_27}
    \end{equation}
According to \cref{ass:1}, $\|F\| = 1$, we have
\begin{equation}
    \|F^n - \tilde{F}\|  \leq c(\mathcal{G}, \mathcal{H})^{2n-1}.
\end{equation}
\item For any finite $k\in \mathbb{Z}^+, n\in \mathbb{Z}^+$, the following inequality holds:
\begin{equation}
    \begin{split}
    e^n_k = \|x^n_k - x_k\| & =\|F^n(x^n_{k-1})-F^n(x_{k-1})\|+\|F^n(x_{k-1})-F(x_{k-1})\| \\
    & \leq \|F^n\|\cdot \|x_{k-1}^n-x_{k-1}\| + \|F^n - F\|\cdot \|x_{k-1}\|\\
    &\leq (1 + \|F^n - F\|)e_{k-1} + \|F^n - F\| M\\
    &\leq (1 + \epsilon_F + c(\mathcal{G}, \mathcal{H})^{2n-1})e_{k-1} + (\epsilon_F + c(\mathcal{G}, \mathcal{H})^{2n-1}) M,
    \end{split}
    \label{equ:proof_convrate_2}
\end{equation}
Since $e^n_k=0$, we have
\begin{equation}
    \begin{split}
    e_k^n = \|x^n_{k}-x_k\| & \leq [(1+\|F^n - F\|)^k-1]M\\
    & \leq [(1+c(\mathcal{G},\mathcal{H})^{2n-1} + \epsilon_F)^k-1]M.
    \end{split}
\end{equation}
\item Let $r^n = F^n - \tilde{F}$, we have
\begin{equation}
    r^{n+1} = Q_A Q_B(r^n)
\end{equation}
When $c(\mathcal{G},\mathcal{H}) = 1$, then \cref{alg:framework} will converge after one step, i.e, $r^{n+1} = r^n = 0,\ \forall n\geq 1$. We have $\|r^{n} - r^{n+1}\| = 0$. Thus, we only consider the case when $c(\mathcal{G},\mathcal{H}) < 1$.
\begin{equation}
    \begin{split}
        r^{n+1} & = \sum_{i = 1}^{+\infty}(r^{n+i} - r^{n+i + 1})\\
        & = \sum_{i = 1}^{+\infty}((Q_A Q_B)^i(r^n - r^{n+1})).
    \end{split}
\end{equation}
By~\eqref{equ:QAQB_norm}, we have
\begin{equation}
    \begin{split}
        \|r_{n+1}\|& \leq \sum_{i=1}^{ + \infty}(c(\mathcal{G},\mathcal{H})^{2n-1}\|r^n - r^{n+1}\|)\\
        & = \frac{c(\mathcal{G},\mathcal{H})}{1-c(\mathcal{G},\mathcal{H})^2} \|F^n - F^{n+1}\|.
    \end{split}
\end{equation}
Similar to~\eqref{equ:proof_convrate_2}, we have
\begin{equation}
    \begin{split}
    e^n_k = \|x^n_k - x_k\| & =\|F^n(x^n_{k-1})-F^n(x_{k-1})\|+\|F^n(x_k)-F(x_k)\| \\
    &\leq (1+\epsilon_F + \frac{c(\mathcal{G},\mathcal{H})}{1-c(\mathcal{G},\mathcal{H})^2} \|F^n - F^{n+1}\|)e_{k-1} + \epsilon_F M,
    \end{split}
\end{equation}
Since $e^n_k=0$, we have
\begin{equation}
    e_k^n = \|x^n_{k}-x_k\| \leq [(1+\frac{c(\mathcal{G},\mathcal{H})}{1-c(\mathcal{G},\mathcal{H})^2} \|F^n - F^{n+1}\| + \epsilon_F)^k-1]M.\\
\end{equation}

Specifically, if $F\in \mathcal{H}\oplus\mathcal{G}$, $\epsilon_F = 0$, the prediction sequence $x^n_{k+1}=F^n(x^n_k)$ has
\begin{equation}
    e_k^n = \|x^n_{k}-x_k\| \leq [(1+\frac{c(\mathcal{G},\mathcal{H})}{1-c(\mathcal{G},\mathcal{H})^2} \|F^n - F^{n+1}\|)^k-1]M.\\
\end{equation}

When $k\frac{c(\mathcal{G},\mathcal{H})}{1-c(\mathcal{G},\mathcal{H})^2} \|F^n - F^{n+1}\|\leq 1$, we have
\begin{equation}
\begin{split}
    e_k^n & \leq [(1+\frac{c(\mathcal{G},\mathcal{H})}{1-c(\mathcal{G},\mathcal{H})^2} \|F^n - F^{n+1}\|)^k-1]M \\
    & \leq [(1+k\frac{c(\mathcal{G},\mathcal{H})}{1-c(\mathcal{G},\mathcal{H})^2} \|F^n - F^{n+1}\| + \frac{k(k-1)}{2!}(\frac{c(\mathcal{G},\mathcal{H})}{1-c(\mathcal{G},\mathcal{H})^2} \|F^n - F^{n+1}\|)^2 + \cdots \\
    &+ (\frac{c(\mathcal{G},\mathcal{H})}{1-c(\mathcal{G},\mathcal{H})^2} \|F^n - F^{n+1}\|)^k)-1]M\\
\end{split}
\end{equation}
For $j\geq 2$, we have
\begin{equation}
    \begin{split}
    &\frac{k(k-1)(k-2)\cdots(k-j+1)}{j!} (\frac{c(\mathcal{G},\mathcal{H})}{1-c(\mathcal{G},\mathcal{H})^2}\|F^n - F^{n+1}\|)^j\\
    \leq & \frac{k^j}{j!}(\frac{c(\mathcal{G},\mathcal{H})}{1-c(\mathcal{G},\mathcal{H})^2}\|F^n - F^{n+1}\|)^j\\
    \leq & k\frac{c(\mathcal{G},\mathcal{H})}{1-c(\mathcal{G},\mathcal{H})^2}\|F^n - F^{n+1}\|.
    \end{split}
\end{equation}
Thus, we have
\begin{equation}
    \begin{split}
    e_k^n & \leq [1+k^2\frac{c(\mathcal{G},\mathcal{H})}{1-c(\mathcal{G},\mathcal{H})^2} \|F^n - F^{n+1}\|-1]M\\
        & = k^2M\frac{c(\mathcal{G},\mathcal{H})}{1-c(\mathcal{G},\mathcal{H})^2} \|F^n - F^{n+1}\|.
\end{split}
\end{equation}

\end{enumerate}
    \end{proof}
    
\subsection{Proof of acceleration scheme}
\label{appendix_acc}
Without loss of generalization, we suppose $\mathcal{H} = \mathrm{span}\{h\}$. 
Since $P_{\mathcal{G}^{\bot} \cap \mathcal{H}^{\bot}}(F) = 0$, we denote $F = F_\mathcal{G}+F_\mathcal{H}$. 
If any one of these three holds - $F_\mathcal{G} = 0$, $F_\mathcal{H} = 0$, or $\mathcal{G} = \mathcal{H}$ - \cref{alg:framework} could immediately converge to the optimal solution. 
Thus, we only consider the situation that $F_\mathcal{G} \neq 0$, $F_\mathcal{H} \neq 0$ and $\mathcal{G} \neq \mathcal{H}$. 
In this case,
\begin{equation}
\begin{split}
r^0 & = F - P_{\mathcal{G}}(F) = P_{\mathcal{G}^{\bot}}(F_\mathcal{G} + F_\mathcal{H})\\
&= P_{\mathcal{G}^{\bot}}(F_\mathcal{H}) = \lambda_1 P_{\mathcal{G}^{\bot}}(h),\qquad \mathrm{(Let\ F_\mathcal{H} = \lambda_1 h)}\\
r^1 & = P_{\mathcal{G}^{\bot}}\circ P_{\mathcal{H}^{\bot}}(r^0) = P_{\mathcal{G}^{\bot}}(F - P_{\mathcal{H}}(P_{\mathcal{G}^{\bot}}(r^0)))\\
& = P_{\mathcal{G}^{\bot}}(F) - P_{\mathcal{G}^{\bot}}(P_{\mathcal{H}}(P_{\mathcal{G}^{\bot}}(r^0)))\\
& = \lambda_1 P_{\mathcal{G}^{\bot}}(h) - \lambda_2 P_{\mathcal{G}^{\bot}}(h) \qquad (\mathrm{Let}\ P_{\mathcal{H}}(P_{\mathcal{G}^{\bot}}(r^0)) = \lambda_2 h)\\
& = \frac{\lambda_1-\lambda_2}{\lambda_1}r^0.
\end{split}
\end{equation}
Thus, $r^0$ and $r^1$ are linearly dependent.
\end{document}